\newtheorem{thm}{Theorem}[section]
\newtheorem{prop}[thm]{Proposition}
\newtheorem{lem}[thm]{Lemma}
\theoremstyle{definition}
\newtheorem{dfn}[thm]{Definition}
\newtheorem{ex}[thm]{Example}
\theoremstyle{remark}
\newtheorem{rem}[thm]{Remark}
\newcommand{\RR}{\mathbb R}
\newcommand{\ZZ}{\mathbb Z}
\newcommand{\CC}{\mathbb C}
\newcommand{\PP}{\Delta}
\begin{document}



\title[Connectivity for real Grassmannians]{Connectivity properties of the Schur-Horn map for real Grassmannians}

\author[A.-L. Mare]{Augustin-Liviu ${\rm MARE}^1$}
{\footnote{Department of Mathematics and Statistics, University of Regina, Canada 

{\it Email address}: {\tt liviu.mare@gmail.com}}

\begin{abstract} To any  $V$ in the Grassmannian ${\rm Gr}_k({\mathbb R}^n)$ of $k$-dimensional vector subspaces in $\RR^n$
one can associate the diagonal entries
of the ($n\times n$) matrix corresponding to the orthogonal projection of $\RR^n$ to $V$. One obtains a map ${\rm Gr}_k(\RR^n)\to \RR^n$
(the Schur-Horn map). The main result of this paper is a criterion for pre-images of vectors in $\RR^n$ to be connected.
This will allow us to deduce  connectivity criteria for a certain class of subspaces of the real Stiefel manifold which arise naturally in frame theory. 
We extend in this way  results of Cahill, Mixon, and Strawn, see \cite{CMS}.  
\end{abstract}
\maketitle
 {\small 
\noindent {\bf Keywords.} Real and complex Grassmann manifolds, real loci, moment maps, convexity properties, Morse theory, Stiefel manifolds, tight frames.  

\noindent {\bf Mathematics Subject Classification.} 14M15, 53C42, 53D20.

}

\section{Introduction}\label{sect1}

Let ${\rm Gr}_k(\RR^n)$ be the Grassmannian of all $k$-dimensional linear subspaces of $\RR^n$. 
Equip $\RR^n$ with the Euclidean inner product. Attach to any $V$ in the Grassmannian the
linear endomorphism of $\RR^n$ given by orthogonal projection to $V$ and denote by $\pi_V$ 
its matrix relative to the canonical basis. In this way, one embeds ${\rm Gr}_k(\RR^n)$  into the space ${\rm Symm}_n(\RR)$ of symmetric 
$n\times n$ matrices, as follows:
$${\rm Gr}_k(\RR^n) = \{P \in {\rm Symm}_n(\RR) \mid P^2=P, \ {\rm tr}(P)=k\}.$$
Assign to any $n\times n$ matrix  $X$ the vector $\mu(X)$ which consists of the diagonal entries of $X$
and obtain in this way a map $\mu: {\rm Gr}_k(\RR^n)\to \RR^n$.
 
It is known that the image of $\mu$ is the polytope  $\PP_{n,k}$,  contained in the hyperplane of
equation $x_1+ \cdots + x_n=k$, with vertices at the points in $\RR^n$ whose coordinates
are 0 and 1, where 1 occurs exactly $k$ times. Alternatively, $\Delta_{n,k}$ is the hypersimplex which consists of all  
 $x=(x_1, \ldots, x_n) \in \RR^n$ such that $0\le x_i \le 1$ for all $1\le i \le n$ and $x_1+\cdots + x_n=k$. This description of the image of $\mu$
  is just a special case of a result which was 
proved by Schur and Horn, see \cite{Schur} and \cite{Ho}. (Even though  their result concerns originally 
matrices that are Hermitean, its version about symmetric matrices with real entries involves only
slight modifications, and deserves to be attributed to the same two authors, see \cite{Mi}, Thm.~2 and the note on p.~21, as well as \cite{MOA}, Ch.~9, Sect.~B, most notably
Thm.~B.2.)

We now consider  levels  of $\mu$, that is, pre-images of the form $\mu^{-1}(d)$, where $d=(d_1, \ldots, d_n) \in \PP_{n,k}$, and wonder 
whether they are connected subspaces of the Grassmannian. The question arises naturally in at least two branches of mathematics:
differential geometry and operator theory. We now explain the relevance for the first area and we will do the same for the second
 at the end of this section. 
The Schur-Horn theorem mentioned above is similar to the Atiyah-Guillemin-Sternberg convexity theorem
for Hamiltonian torus  actions,
see \cite{At} and \cite{Gu-St}. That is, both results claim that the image of a certain map is a  
polytope. For Hamiltonian torus actions this map is the moment map and an important feature of it 
is that its levels are all connected subspaces of the domain. 
Now, even though the image of the map $\mu$ above is a convex polytope,
its levels are in general not connected, as one can easily see
 in the case where $n=2$ and $k=1$, when $\mu$ is just the orthogonal projection of a circle to a straight line in the same plane.     
  However, by assuming that $2\le k \le n-2$, an interesting level $\mu^{-1}(d)$ which is connected arises, namely the one described by 
  $d_1= \cdots = d_n$: this was  shown by Cahill, Mixon, and Strawn, see  \cite[Cor.~1.3]{CMS}. Our first main result is as follows:

\begin{thm}\label{mainthm}  If $d=(d_1, \ldots, d_n)\in \PP_{n,k}$ is such that 
\begin{equation} \label{hypothesis} d_{i_1} +\cdots + d_{i_{n-k}}\ge 1 \ {\it 
 for \ any \ pairwise  \  distinct } \ i_1, \ldots, i_{n-k} \in \{1, \ldots, n\}, \end{equation}  then 
 $\mu^{-1}(d)$ is a connected subspace of ${\rm Gr}_k(\RR^n)$. 
\end{thm}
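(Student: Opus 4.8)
The plan is to factor $\mu^{-1}(d)$ through a convex polytope of \emph{eigensteps} and then reduce the statement to a combinatorial connectivity property of that polytope together with a Morse-theoretic analysis of the fibers. For $V\in\mathrm{Gr}_{k}(\RR^{n})$ write $\pi=\pi_{V}$, let $\pi^{(m)}$ be its upper-left $m\times m$ principal block, and attach to $V$, for each $m=0,1,\dots,n$, the decreasingly ordered eigenvalue vector $\lambda^{m}(V)$ of $\pi^{(m)}$. (Equivalently, if $\pi=FF^{T}$ with $F^{T}F=I_{k}$ --- so that the rows $f^{i}$ of $F$ form a Parseval frame of $\RR^{k}$ with $\|f^{i}\|^{2}=d_{i}$, which is the frame-theoretic incarnation of $\mu^{-1}(d)$ --- then $\lambda^{m}(V)$ records the eigenvalues of the partial frame operator $\sum_{i\le m}f^{i}(f^{i})^{T}$.) Cauchy interlacing, the trace identities $\operatorname{tr}\pi^{(m)}=d_{1}+\dots+d_{m}$, and the endpoints $\lambda^{0}=0$, $\lambda^{n}=(1,\dots,1)$ confine $\lambda^{\bullet}(V)$ to a bounded polyhedron $\mathcal E(d)$ cut out by linear interlacing inequalities and these linear equalities; hence $\mathcal E(d)$ is convex, so connected, and it is (a slice of) the image of a Gelfand--Cetlin-type integrable system on $\mathrm{Gr}_{k}(\CC^{n})$. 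Iterating the real Schur--Horn theorem --- at each stage $m\to m+1$ one solves a bordered-matrix inverse-eigenvalue problem whose solvability is exactly the interlacing condition --- shows that the induced map $\Phi\colon\mu^{-1}(d)\to\mathcal E(d)$ is surjective. Since $\mathcal E(d)$ is connected, it remains to understand the fibers of $\Phi$ and how they glue over its face lattice.

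Over the open stratum of $\mathcal E(d)$ on which every interlacing inequality not forced to be an equality is strict, recovering $V$ from $\lambda^{\bullet}$ leaves at each stage a single sign choice in each one-dimensional eigenspace of $\pi^{(m)}$ whose eigenvalue strictly separates its interlacing neighbours; so over this stratum $\Phi$ is a finite covering with fiber $(\ZZ/2)^{N}$ for an explicit $N=N(d)\ge1$. Crossing into a facet of $\mathcal E(d)$ along which one further interlacing becomes an equality annihilates one of these sign choices and identifies the corresponding pairs of sheets. To finish, I would show that these facet identifications connect all $2^{N}$ sheets: check that the open stratum is simply connected relative to its affine span, so the covering is trivial, and that for each of the $N$ sign coordinates some facet of $\mathcal E(d)$ degenerates precisely that coordinate --- so the resulting graph on the $2^{N}$ sheets has an edge toggling every coordinate and is connected. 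Hypothesis \eqref{hypothesis} enters exactly here: since $\sum_{i}d_{i}=k$ it is equivalent to demanding that the sum of any $k$ of the $d_{i}$ be at most $k-1$, which keeps $d$ uniformly away from every vertex of $\Delta_{n,k}$ and forces $\mathcal E(d)$ to be generic enough that none of the needed facets collapses and no deeper face supports a positive-dimensional (flag-manifold) fiber that could obstruct the gluing. Without it --- already for $n=2$, $k=1$, and more generally whenever $k=1$ --- the polytope $\mathcal E(d)$ degenerates and $\mu^{-1}(d)$ genuinely disconnects.

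The hard part will be the fiber analysis over the deeper faces of $\mathcal E(d)$, where a repeated eigenvalue of some $\pi^{(m)}$ replaces a finite sign-fiber by a flag manifold, and extracting from \eqref{hypothesis} the precise statement that excludes the configurations in which such a face fails to reconnect the sheets (or is itself disconnected). A convenient way to organize all of this is an induction on $n$: choosing a coordinate $d_{i}$ and performing a circle symplectic reduction of $\mathrm{Gr}_{k}(\CC^{n})$ at the level $d_{i}$ of the $i$-th component of $\mu$ produces a Schur--Horn problem on a smaller Grassmannian-type reduced space whose defining data one verifies still satisfies \eqref{hypothesis}; the subtlety peculiar to the real case is that connectivity of the real locus of the reduced space gives connectivity of $\mu^{-1}(d)$ only up to the residual $\ZZ/2$ coming from $\{\pm1\}\subset S^{1}$, so one must also produce enough $\ZZ/2$-fixed points in the relevant fiber --- again via \eqref{hypothesis} --- to conclude that $\mu^{-1}(d)$ is connected on the nose.
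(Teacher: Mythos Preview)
Your proposal is not a proof but a programme, and the programme is genuinely different from the route taken in the paper. The paper never touches eigensteps or Gelfand--Cetlin combinatorics. Instead it works with the norm-squared function $f_{\rm r}(V)=\|\mu(V)-d\|^{2}$ on $\mathrm{Gr}_{k}(\RR^{n})$, imports from Baird--Heydari a Kirwan-type stratification of the real Grassmannian by submanifolds $\Sigma_{a,[u]}$, and then shows (this is the whole content of the key Proposition) that hypothesis~\eqref{hypothesis} rules out any stratum of codimension exactly~$1$. The unique codimension-$0$ stratum is then connected because its complement has codimension $\ge 2$, and it deformation retracts onto $\mu^{-1}(d)$. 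The way \eqref{hypothesis} is actually used is sharp and algebraic: if some $\Sigma_{a,[u]}$ had codimension~$1$, a Schubert-type codimension formula forces $c_{1}+\dots+c_{r}=1$ and $m_{1}+\dots+m_{r}=n-k$ for some $r$, and then trace identities plus the ordering $b_{1}>\dots>b_{\ell}$ yield $d^{\sigma}_{1}+\dots+d^{\sigma}_{n-k}<1$, contradicting \eqref{hypothesis}.

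Against this, your plan has two real gaps. First, the core step --- ``I would show that these facet identifications connect all $2^{N}$ sheets'' and that \eqref{hypothesis} ``forces $\mathcal E(d)$ to be generic enough that none of the needed facets collapses'' --- is asserted, not argued. You need a precise combinatorial statement about which facets of $\mathcal E(d)$ toggle which sign coordinates, and a proof that \eqref{hypothesis} guarantees each such facet is nonempty; ``keeps $d$ uniformly away from every vertex of $\Delta_{n,k}$'' is not the same thing, and the equal-norm case handled in the literature via eigensteps already requires substantial case analysis. Second, you yourself flag the fiber analysis over deeper faces (repeated eigenvalues producing positive-dimensional flag fibers) as ``the hard part'' and do not carry it out; likewise your inductive alternative ends with an unresolved $\ZZ/2$ ambiguity that you say must be killed ``again via \eqref{hypothesis}'' without indicating how. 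Until those steps are written down, the proposal does not establish the theorem. If you want to pursue this route, the eigenstep machinery is viable in principle --- it is essentially how Cahill--Mixon--Strawn handle the equal-norm case --- but the paper's Kirwan/real-locus argument is both shorter and makes the role of \eqref{hypothesis} completely transparent: it is exactly the inequality that blocks codimension-$1$ strata.
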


\begin{rem}
The condition described by eq.~(\ref{hypothesis}) is sufficient for the connectivity of $\mu^{-1}(d)$ without being
necessary.  To see this, consider again the special case $n=2$ and $k=1$: $\Delta_{2,1}$ is the straight line segment in 
$\RR^2$ of end-points $(1,0)$ and $(0,1)$, and the pre-image of  any of these two points is just a point.   Another relevant situation is presented in Example \ref{exa} below. 
\end{rem}

\begin{rem}
If the hypothesis of Thm.~\ref{mainthm} is satisfied, then $k \ge 2$ and $n-k \ge 2$.
Assume that $k=1$: if there exists a vector $d \in \PP_{n,k}$ with 
 $d_{i_1} +\cdots + d_{i_{n-1}}\ge 1$ for all pairwise distinct $i_1, \ldots, i_{n-1}\in \{1, \ldots, n\}$,
 by adding up all the inequalities above one obtains
 $$(n-1)(d_1+\cdots + d_n) \ge n,$$
 which contradicts $d_1+ \cdots + d_n =1$.
 If $k=n-1$ and there exists a vector $d \in \PP_{n,k}$ whose entries are all at least 1
 then their sum $d_1+ \cdots + d_n$ would be at least $n$; this is impossible, because $d$ is in $\PP_{n,k}$ and hence
 $d_1+ \cdots + d_n =n-1$.
\end{rem}

\begin{ex}\label{example1}
For any $n$ and $k$ such that $k\ge 2$ and $n-k\ge 2$, there is at least one $d\in \PP_{n,k}$ which satisfies 
assumption (\ref{hypothesis}). This is the vector whose entries are all equal to each other,
that is  $$d=\left(\frac{k}{n}, \ldots, \frac{k}{n}\right).$$
One needs to verify that
$$(n-k)\cdot \frac{k}{n} \ge 1.$$
But this is equivalent to
$$\frac{1}{k}+\frac{1}{n-k}\le 1,$$
which is obviously true. Thus Thm.~\ref{mainthm} is a generalization of  \cite[Cor.~1.3]{CMS}. 
\end{ex}

\begin{ex}\label{example2} More generally, one may take $d_1=\cdots=d_k=:\alpha$ and $d_{k+1} = \cdots = d_n =:\beta$
where $\alpha, \beta \ge 0$ such that
\begin{align*}
{}& k\alpha + (n-k)\beta = k\\
{}& \alpha \ge \beta\\
{}& (n-k)\beta \ge 1.
\end{align*}
From the first condition,
$$\alpha = 1-\frac{n-k}{k}\beta.$$
This is at least equal to $\beta$ if and only if
$$\beta \le \frac{k}{n}.$$
In conclusion, examples of vectors $d$ that are contained in $\PP_{n,k}$ and satisfy
the condition (\ref{hypothesis}) are determined by numbers $\beta$ which satisfy
$$\frac{1}{n-k}\le \beta \le \frac{k}{n}.$$ 
This class of examples is more general than the situation described in Ex.~\ref{example1},
except the case when $n=4$ and $k=2$. 
\end{ex}

In order to prove Thm.~\ref{mainthm} we will use the idea employed by Kirwan, see
\cite{Ki}, in dealing with  levels of moment maps for Hamiltonian group actions on symplectic manifolds. It all starts with the elementary observation
that the level $\mu^{-1}(d)$ we are interested in is the minimum level of  the real function
 given by \begin{equation} \label{vm} V\mapsto \| \mu(V) - d\|^2,\end{equation} $V\in {\rm Gr}_k(\RR^n)$. Without being a Morse function in the
 classical sense, this function gives rise
 to a Morse-type stratification of the Grassmannian, the strata being  embedded submanifolds.   
The proof of  Thm.~\ref{mainthm} relies on the crucial observation that, due to condition (\ref{hypothesis}), the stratification
contains no stratum of
codimension 1. Consequently, the 0-codimensional stratum is connected. Since this can be retracted
to the minimum level $\mu^{-1}(d)$,  the latter is connected as well. 

The stratification mentioned above along with its  properties needed in the proof is not immediately available. 
We obtain it as a particular case of a construction which belongs to Baird and Heydari, see
\cite{Ba}, in the general context of real loci of symplectic manifolds. More precisely,
the complex Grassmannian ${\rm Gr}_k(\CC^n)$ has a canonical symplectic structure and
contains ${\rm Gr}_k(\RR^n)$ as a real locus, see Sect.~\ref{sect:real} for more details.
It will turn out that height functions on the Grassmannian and Morse theoretical results concerning them
are essential ingredients: all that is nicely described in Guest's paper \cite{Gu}.

A final comment regarding Thm.~\ref{mainthm} concerns the relationship with the connectivity results for isoparametric submanifolds, which we have 
previously
obtained 
in \cite{Ma}. The real Grassmannian belongs to an isoparametric foliation of Euclidean space, which is just the orbit  
foliation of the action of the orthogonal group ${\rm O}(n)$ on ${\rm Symm}_n(\RR)$. However, one cannot use Thm.~1.1, or rather
Rem.~1.3 (a), in \cite{Ma} to prove our main result. The reason is that the root multiplicities of the isoparametric foliation at hand are all
equal to 1, whereas in \cite{Ma} they are supposed to be all at least 2.   
It could be possible to use the methods of our previous work to show  
that the function described by eq.~(\ref{vm}) is minimally degenerate in the sense of Kirwan and then
use this fact to construct the stratification of ${\rm Gr}_k(\RR^n)$ mentioned above.
We have not verified whether this approach really works, since it seems that using the results of Baird and Heydari's paper \cite{Ba} is more convenient. 

Pre-images of various vectors in $\PP_{n,k}$ are also relevant in frame theory. The last section of the paper is devoted to showing how to deduce from  Thm.~\ref{mainthm}  connectivity results for certain spaces of frames. To illustrate their relevance, we make a brief introduction to this topic.
For a more thorough presentation we refer to the monograph \cite{Wa}.   
 A (finite) {\it frame} is an   $n$-tuple $f_1, \ldots, f_n$ of vectors in $\RR^k$ for which  there exist two numbers $a$ and $b$ such 
that
\begin{equation}\label{defframe} a\|x\|^2 \le \sum_{i=1}^n  |\langle x, f_i\rangle |^2 \le b \|x\|^2,\end{equation}
for all $x\in \RR^n$. The inner product we used here is the standard Euclidean one and $\| \cdot\|$
is the corresponding norm. 
The notion described by the equation above was first considered in 1952 by Duffin and Schaeffer, see \cite{Du-Sch},
in an  infinite dimensional setting, namely the one of  Fourier series for functions in $L_2[-\pi, \pi]$ relative to a given spanning set which is not necessarily a basis.
 Afterwards it was noticed that  frames are relevant not only in harmonic analysis but also in other branches of  mathematics, such as signal processing \cite{Mallat}, coding \cite{Strohmer-Heath},
 wireless communication \cite{Strohmer}, etc.

A particular attention has received the situation when in eq.~(\ref{defframe}) one can choose $a=b$, in which case  we say that the resulting frame is {\it tight}.
  Under this assumption, after an obvious scalar multiplication/normalization of the
vectors $f_i$, one can achieve that $a=b=1$. A frame for which the latter condition is satisfied is said to be {\it normalized}. We will use the abbreviation 
NTF for {\it normalized tight frame}. Let us regard $f_i$ as column vectors and identify the $n$-tuple $(f_i)_{i=1}^n$ with the $k\times n$
matrix 
 $F=[ \ f_1 \ | \ \ldots \ | \ f_n \ ]$. This turns out to be convenient, since the NTF-assumption above is then equivalent to the simple equation  
\begin{align} {}& FF^t={\rm I}_k. \label{fft}  \end{align}
Yet another assumption often imposed in the literature concerns the norms of the vectors $f_i$, $1\le i \le n$. A frequent requirement is that
these are all equal. 
Efforts have been made to investigate the space of all equal-norm NTFs. 
It inherits from ${\rm Mat}_{k\times n}(\RR)$ the topology of subspace and is obviously a real algebraic variety, but its geometric structure is 
still far from being understood. For example, one could settle the question whether it is connected only recently, in the 2017 work \cite{CMS} by Cahill, Mixon and Strawn, which we already mentioned above.
They showed that the space of equal-norm NTFs is connected if and only if $2\le k \le n-2$, as previously conjectured in 2006 by Dykema and Strawn, see \cite{DS}.
It should be pointed out that the space which was actually studied in these two articles differs from ours by a normalization factor, 
due to the fact that in both places the right hand side of  eq.~(\ref{fft}) is $\frac{n}{k}I_k$ (consequently the norms of $f_i$ are all equal to 1
and the vectors determine what is called in \cite{DS} a ``spherical tight frame" and in \cite{CMS}  a ``unit tight frame").

As expected, even less is known under the assumption that the norms are prescribed, but not equal to each other. 
Concretely, let us pick a vector $d=(d_1, \ldots, d_n)$ whose components are all non-negative and impose the conditions
\begin{align} \|f_i\|^2=d_i, \ 1 \le i \le n.\label{norm} \end{align}
We call a frame $F$ which satisfies the assumptions (\ref{fft}) and (\ref{norm})  a $d$-NTF and denote  by ${\mathcal F}^d_{n,k}$ the space of all such frames.
The first natural question is now whether this space is non-empty. Necessary and sufficient conditions for this to occur were found in \cite{CL}, see also the beginning of  Sect.~\ref{app:frames} below.
In the spirit of \cite{CMS}, one may now wonder when is ${\mathcal F}^d_{n,k}$ connected. This question is addressed in the second half of this article.
Here is a brief summary. We first notice that eq.~(\ref{fft}) describes a submanifold of the space of $k\times n$ matrices, which is nothing but the real Stiefel manifold
${\rm V}_k(\RR^n)$. 
Thus  ${\mathcal F}_{n,k}^d$ is the subspace of ${\rm V}_k(\RR^n)$ described  by the equations (\ref{norm}). Furthermore, the canonical projection ${\rm V}_k(\RR^n)\to {\rm Gr}_k(\RR^n)$ induces an ${\rm O}(k)$-principal bundle 
${\mathcal F}_{n,k}^d \to \mu^{-1}(d_1, \ldots, d_n)$. If assumption (\ref{hypothesis}) is satisfied, the codomain of this map is a connected topological space.
The challenge is now to detect supplementary assumptions under which the domain of the map is connected as well. Such conditions can be found 
in Sect.~\ref{app:frames}, particularly  Propositions \ref{prop:first} and \ref{prop:second}. 

Note that this approach is not new, having been used 
     by Needham and Shonkwiler to prove connectivity results   for frames whose entries are complex numbers or quaternions, see  \cite{NS1} and \cite{NS2} respectively.
     It is worth pointing out that these results hold true without any assumption on $d$. This is because  {\it all} levels of $\mu$ are connected: in the complex case one applies the Atiyah-Guillemin-Sternberg  theorem for Hamiltonian group actions on symplectic manifolds, see \cite{At} and \cite{Gu-St}; in the quaternionic case one can rely on results concerning isoparametric
     submanifolds of Euclidean space obtained in \cite{Ma}. Besides, the principal bundle mentioned above has as structural group ${\rm U}(n)$ and ${\rm Sp}(n)$, respectively, which is in  both cases a connected group:
     thus the connectivity of $\mu^{-1}(d_1, \ldots, d_n)$ implies  automatically the connectivity of the space of frames.

      In
Subsections \ref{sub1} and \ref{sub2} we illustrate  Propositions \ref{prop:first} and \ref{prop:second}  by some examples of  spaces of normalized tight frames which are connected, without being of equal-norm type. 
Subsection \ref{poly} concerns connections with  spaces of polygons in $\RR^2$.   

\vspace{0.2cm} 

\noindent {\bf Acknowledgements.} I would like to thank Tom Needham and Clayton Shonkwiler for suggesting the topic of the present paper
and for a fruitful
exchange of ideas, Jost-Hinrich Eschenburg for helpful discussions, and the anonymous referee for carefully reading the manuscript and suggesting numerous improvements.  

\section{Height functions on Grassmannians} 

Equip the vector space ${\rm Symm}_n(\RR)$ with the inner product 
\begin{equation} \label{metric}\langle X, Y\rangle ={\rm tr} (XY), \end{equation} $X, Y\in {\rm Symm}_n(\RR)$. 
By means of the aforementioned embedding ${\rm Gr}_k(\RR^n)\subset {\rm Symm}_n(\RR)$, to any  $a=(a_1, \ldots, a_n) \in \RR^n$ one attaches the height function
$h_a: {\rm Gr}_k(\RR^n) \to \RR$, \begin{equation}\label{hadef}h_a(V)=\langle \pi_V,D_a\rangle,\end{equation}
where $D_a$ denotes the diagonal matrix ${\rm Diag} (a_1, \ldots, a_n)$.  
The following description of the critical set of $h_a$ has been obtained by Guest in\footnote{Guest is actually dealing with
the {\it complex} Grassmannian ${\rm Gr}_k(\CC^n)$; however he points out that similar results hold for the {\it real} Grassmannian,
see \cite[Remark, p.~170]{Gu}.}  \cite[Sect.~3.2]{Gu}
under the assumption that 
\begin{equation}\label{ineqa}a_1\ge a_2 \ge \cdots \ge a_n\ge 0.\end{equation}

Consider  the eigenspace decomposition of $D_a: \RR^n \to \RR^n$, which is
$$\RR^n=E_1 \oplus \cdots \oplus E_\ell.$$
To any subset $u$  with $k$ elements in $\{1, \ldots, n\}$, of the form $u=(u_1< \cdots < u_k)$, one attaches the vector space
$$V_u=\RR e_{u_1} \oplus \cdots \oplus \RR e_{u_k}$$
along with its orbit under the group ${\rm O}(E_1) \times \cdots \times  {\rm O}(E_\ell)$.
We say that two subsets $u$ and $u'$ with $k$ elements in $\{1, \ldots, n\}$ are equivalent if the
corresponding spaces $V_u$ and $V_{u'}$ are in the same orbit of the group mentioned above.
Let $[u]$ denote the equivalence class of $u$ and set  
$$M_{[u]}^a:={\rm O}(E_1) \times \cdots \times  {\rm O}(E_\ell) .V_u.$$
The latter is a product of Grassmannians, of the form
\begin{equation}\label{mu}M_{[u]}^a={\rm Gr}_{c_1}(E_1) \times \cdots \times {\rm Gr}_{c_\ell}(E_\ell),\end{equation}
where $c_i =\dim (V_u\cap E_i)$, $1\le i \le \ell.$
Note that $0\le c_i \le \dim E_i$, for all $1\le i \le \ell$
and $c_1 +\cdots + c_\ell=k$. It turns out that the critical set of $h_a$
is the union of all products of the form given by eq.~(\ref{mu}),  where $c_1, \ldots, c_\ell$ are
integers that satisfy the conditions above. Moreover, $h_a$ is a Morse-Bott function. 
Since ${\rm Gr}_k(\RR^n)$ is a submanifold of 
${\rm Symm}_n(\RR)$, it inherits a Riemannian metric from the inner product described by eq.~(\ref{metric}).
One can also describe the corresponding  stable manifolds of $h_a$ as follows:  to $M_{[u]}^a$
one associates
\begin{equation}\label{stablemnf}S_{[u]}^a = \{V \in {\rm Gr}_k(\RR^n) \mid \dim V \cap (E_1 \oplus \cdots \oplus E_i)= c_1+ \cdots + c_i, 
\ {\rm for \ all } \ 1\le i \le \ell\}.\end{equation}

Before recording all these facts, we would like to point out that the assumption
$a_n \ge 0$ in (\ref{ineqa}) can be dropped. Assume that $a_1\ge \cdots \ge a_n.$ Then
for any $V\in {\rm Gr}_k(\RR^n)$, 
\begin{align*}
 h_a(V)  & = {\rm tr}(\pi_V {\rm Diag}(a_1, \ldots, a_n))\\
{}& ={\rm tr}(\pi_V({\rm Diag}(a_1-a_n, \ldots, a_{n-1}-a_n, 0)+a_nI_n))\\
{}& ={\rm tr}(\pi_V({\rm Diag}(a_1-a_n, \ldots, a_{n-1}-a_n, 0))+{\rm tr}(a_n\pi_V)\\
{}&={\rm tr}(\pi_V({\rm Diag}(a_1-a_n, \ldots, a_{n-1}-a_n, 0))+ka_n.
\end{align*}
Thus up to an additive constant,  $h_a$ is just the height function corresponding to $(a_1-a_n, \ldots, a_{n-1}-a_n, 0)$,
where $a_1-a_n\ge  \cdots \ge a_{n-1}-a_n \ge 0$. Moreover, the eigenspace decompositions  of $\RR^n$ relative to 
${\rm Diag}(a_1, \ldots, a_n)$ and ${\rm Diag}(a_1-a_n, \ldots, a_{n-1}-a_n, 0)$ are the same.
We summarize the facts mentioned above:

\begin{thm}\label{thmguest} \cite[Sec.~3.2]{Gu}
Assume that $a=(a_1, \ldots, a_n)\in \RR^n$ satisfies $a_1\ge \cdots \ge a_n$. 
 Then the  height function $h_a: {\rm Gr}_k(\RR^n)\to \RR$ is Morse-Bott. Its critical set is the
 (disjoint) union of all submanifolds $M_{[u]}^a$, see (\ref{mu}),
 where $u$ is a subset with $k$ elements of $\{1, \ldots, n\}$.
For each such $u$, the stable manifold $S^a_{[u]}$ is given by (\ref{stablemnf}).
\end{thm}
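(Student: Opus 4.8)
The plan is to deduce everything from the special case $a_1\ge \cdots \ge a_n\ge 0$, which is precisely the content of \cite[Sec.~3.2]{Gu}, and then to handle the general case by the elementary reduction indicated just before the statement. For the special case, I would recall how Guest's argument runs: writing $\pi_V=\sum_{j=1}^k v_jv_j^t$ for an orthonormal basis $v_1,\ldots,v_k$ of $V$, one has $h_a(V)=\mathrm{tr}(\pi_V D_a)=\sum_{j} v_j^tD_av_j$, and a standard first-variation computation on $\mathrm{Gr}_k(\RR^n)$ shows that $V$ is critical for $h_a$ exactly when $V$ is $D_a$-invariant, equivalently $V=\bigoplus_{i=1}^\ell (V\cap E_i)$; fixing the dimensions $c_i=\dim(V\cap E_i)$ with $c_1+\cdots+c_\ell=k$ then produces the component $M^a_{[u]}=\mathrm{Gr}_{c_1}(E_1)\times\cdots\times\mathrm{Gr}_{c_\ell}(E_\ell)$, on which $h_a$ is constant. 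The second-variation computation in the directions normal to $M^a_{[u]}$ is nondegenerate, its positive and negative parts being controlled by the relative order of the eigenvalues of $D_a$ on the various $E_i$, so $h_a$ is Morse-Bott; and an analysis of the gradient flow of $h_a$ for the induced metric, which moves $V$ toward the eigenspaces with larger eigenvalue, identifies the stable manifold of $M^a_{[u]}$ with the set $S^a_{[u]}$ cut out by the flag-rank conditions in (\ref{stablemnf}).

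It then remains to pass from $a_1\ge\cdots\ge a_n\ge 0$ to an arbitrary $a=(a_1,\ldots,a_n)$ with $a_1\ge\cdots\ge a_n$. Put $a'=(a_1-a_n,\ldots,a_{n-1}-a_n,0)$, so that $a'_1\ge\cdots\ge a'_{n-1}\ge a'_n=0$. The computation displayed above, which uses $\mathrm{tr}(\pi_V)=k$, gives $h_a=h_{a'}+ka_n$, so $h_a$ and $h_{a'}$ differ by an additive constant. A constant changes neither the critical set, nor the Hessian at a critical point (hence neither the Morse-Bott property nor the normal indices), nor the gradient vector field for the induced Riemannian metric (hence neither the stable nor the unstable manifolds). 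Moreover $D_{a'}=D_a-a_nI_n$ has the same eigenspaces $E_1,\ldots,E_\ell$ as $D_a$, so the equivalence classes $[u]$, the critical components $M^a_{[u]}=M^{a'}_{[u]}$, and the subsets $S^a_{[u]}=S^{a'}_{[u]}$ are literally the same objects for $a$ as for $a'$. Hence the conclusions of the theorem for $h_{a'}$ transfer verbatim to $h_a$.

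The only genuine work sits inside the case $a_n\ge 0$, namely the second-variation computation establishing nondegeneracy of the normal Hessian together with the precise description of the gradient-flow stable manifolds, and for this I would simply cite \cite[Sec.~3.2]{Gu}, along with the remark on p.~170 there to the effect that the real Grassmannian behaves exactly like the complex one treated in the body of that paper. The reduction step above is entirely routine, so the main (and already resolved) obstacle is outsourced to the cited reference.
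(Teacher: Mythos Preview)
Your proposal is correct and matches the paper's own treatment: the paper does not give an independent proof but cites \cite[Sec.~3.2]{Gu} (with the remark on p.~170 there for the real case) for $a_1\ge\cdots\ge a_n\ge 0$, and performs exactly the same additive-constant reduction $h_a=h_{a'}+ka_n$ with $a'=(a_1-a_n,\ldots,a_{n-1}-a_n,0)$ that you describe to drop the hypothesis $a_n\ge 0$.
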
 

\begin{rem}\label{ambig}
The special situation when $a=(0, \ldots, 0)$ will play an important role later on. 
The function $h_a$ is in this case just the zero function so it is elementary that its critical set is the whole  ${\rm Gr}_k(\RR^n)$. 
Thm.~\ref{thmguest} leads to the same conclusion, as follows. 
Clearly $\ell=1$, $E_1=\RR^n$, and thus for any
subset $u\subset \{1, \ldots, n\}$ with $k$ elements, we have $c_1=\dim(V_u \cap E_1)=k$, which shows that both $M_{[u]}^{(0, \ldots, 0)}$ and 
$S_{[u]}^{(0, \ldots, 0)}$ are equal to the whole 
${\rm Gr}_k(\RR^n)$.     
\end{rem} 

We now adopt a general notation:  if $x=(x_1, \ldots, x_n)\in \RR^n$ and $\tau \in S_n$ is a permutation of
the set $\{1, \ldots, n\}$, then $x^\tau$ is the vector in $\RR^n$ given by
$$x^\tau_i=x_{\tau(i)},$$
for all $1\le i \le n.$  
This will be needed to achieve our  next goal, which is to take $a=(a_1, \ldots, a_n) \in \RR^n$ {\it arbitrary} and determine the critical set of the function $h_a$.
First reorder the entries of $a$ and get  $a^\sigma\in \RR^n$ such that
\begin{equation}\label{orfera}a_{\sigma(1)} \ge \cdots \ge a_{\sigma(n)},\end{equation}
where $\sigma \in S_n$.
Consider the linear (actually orthogonal) transformation $g$ of $\RR^n$ given by
\begin{equation}\label{gfera} g(e_i)=e_{\sigma(i)},\end{equation}
for all $1\le i \le n$. The obvious induced automorphism of ${\rm Gr}_k(\RR^n)$ is denoted by $g$ as well. 

\begin{prop}\label{critgen}
\begin{itemize}
\item[(i)] For any $V\in {\rm Gr}_k(\RR^n)$, one has 
\begin{equation} \label{ha} h_{{a}^\sigma}(V)=h_a(gV)\end{equation}
and
\begin{equation} \label{mug} \mu(gV)=\mu(V)^{\sigma^{-1}}.\end{equation}
\item[(ii)] 
The function $h_a$ is Morse-Bott and its critical set  is  
$${\rm Crit}(h_a)=g{\rm Crit}(h_{{a}^\sigma})=\bigcup gM_{[u]}^{{a}^\sigma},$$
where ${u\subset \{1,\ldots, n\}}$ with $k$ elements.  
Furthermore, the  stable manifold corresponding to $gM_{[u]}^{{a}^\sigma}$ is $gS_{[u]}^{{a}^\sigma}$, where
$S_{[u]}^{{a}^\sigma}$ is described by eq.~(\ref{stablemnf}). 
\end{itemize}
\end{prop}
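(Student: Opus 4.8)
The plan is to deduce everything from Theorem~\ref{thmguest} by transporting structure along the change of variables $V\mapsto gV$. I would begin with part (i). The elementary observation is that for $V\in{\rm Gr}_k(\RR^n)$ one has $h_a(V)={\rm tr}(\pi_V D_a)=\sum_{i=1}^n(\pi_V)_{ii}a_i=\langle\mu(V),a\rangle$, where $\langle\,,\,\rangle$ now denotes the standard Euclidean product on $\RR^n$. Since $g$ is orthogonal, the orthogonal projection onto $gV$ is $\pi_{gV}=g\pi_Vg^t$; writing the matrix of $g$ as $g_{ij}=\delta_{i,\sigma(j)}$ and reading off the $i$-th diagonal entry of $g\pi_Vg^t$ gives $\mu(gV)_i=(\pi_V)_{\sigma^{-1}(i),\sigma^{-1}(i)}=\mu(V)_{\sigma^{-1}(i)}$, that is, $\mu(gV)=\mu(V)^{\sigma^{-1}}$, which is (\ref{mug}). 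Equation (\ref{ha}) then follows immediately: $h_a(gV)=\langle\mu(gV),a\rangle=\langle\mu(V)^{\sigma^{-1}},a\rangle=\langle\mu(V),a^\sigma\rangle=h_{a^\sigma}(V)$, the third equality being just a reindexing of the sum by $j=\sigma^{-1}(i)$. (Alternatively, one checks directly that $g^tD_ag=D_{a^\sigma}$ and computes $h_a(gV)={\rm tr}(g\pi_Vg^tD_a)={\rm tr}(\pi_Vg^tD_ag)$.)

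For part (ii), note that by (\ref{orfera}) the vector $a^\sigma$ has non-increasing entries, so Theorem~\ref{thmguest} applies to $h_{a^\sigma}$: it is Morse--Bott, its critical set is $\bigcup_u M_{[u]}^{a^\sigma}$, and the stable manifold attached to $M_{[u]}^{a^\sigma}$ is $S_{[u]}^{a^\sigma}$ as in (\ref{stablemnf}). By (\ref{ha}) we have $h_a=h_{a^\sigma}\circ g^{-1}$, and $g$ is a diffeomorphism of ${\rm Gr}_k(\RR^n)$; hence $h_a$ is Morse--Bott and ${\rm Crit}(h_a)=g\,{\rm Crit}(h_{a^\sigma})=\bigcup_u gM_{[u]}^{a^\sigma}$, each $gM_{[u]}^{a^\sigma}$ being a connected critical submanifold (a product of Grassmannians, as in (\ref{mu})).

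It remains to identify the stable manifolds, and for this I would check that $g$ acts as an isometry of ${\rm Gr}_k(\RR^n)$ for the metric induced by (\ref{metric}): conjugation $X\mapsto gXg^t$ by the orthogonal matrix $g$ preserves ${\rm tr}(XY)$ and maps ${\rm Gr}_k(\RR^n)$ onto itself, hence restricts to an isometry. Consequently the gradient flow of $h_a=h_{a^\sigma}\circ g^{-1}$ is conjugate, via $g$, to that of $h_{a^\sigma}$: a trajectory $\gamma(t)$ of $\grad h_a$ converges to a critical point $c$ if and only if $g^{-1}\gamma(t)$ is a trajectory of $\grad h_{a^\sigma}$ converging to $g^{-1}c$. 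Therefore the stable manifold of $gM_{[u]}^{a^\sigma}$ for $h_a$ equals $g$ applied to the stable manifold of $M_{[u]}^{a^\sigma}$ for $h_{a^\sigma}$, namely $gS_{[u]}^{a^\sigma}$, as asserted.

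The whole argument is transport of structure along the isometry $g$, so there is no essential obstacle; the only points needing care are the consistent use of $\sigma$ versus $\sigma^{-1}$ in the reindexing that produces (\ref{mug}), and the verification that $g$ preserves the specific Riemannian metric in play, since that is exactly what licenses passing from the critical set to the stable manifolds.
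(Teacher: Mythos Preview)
Your proof is correct and follows essentially the same route as the paper: both compute $\pi_{gV}=g\pi_Vg^{-1}$ (equivalently $g\pi_Vg^t$) and use $g^{-1}D_ag=D_{a^\sigma}$ to obtain (\ref{ha}) and (\ref{mug}), then deduce (ii) from (\ref{ha}). Your treatment of (ii) is in fact more thorough than the paper's---you explicitly verify that $g$ is an isometry for the metric (\ref{metric}) in order to transport the stable manifolds, whereas the paper simply declares (ii) a ``straightforward consequence'' of (\ref{ha}).
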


\begin{proof} (i) We first note that the matrices of the projections $\pi_V$ and $\pi_{gV}$
are related by $\pi_{gV}=g\pi_V g^{-1}$. Consequently,
\begin{align*}
 h_a(gV)  & = {\rm tr}(\pi_{gV} D_a)\\
{}& ={\rm tr}(g\pi_{V} g^{-1} D_a)\\
{}& ={\rm tr}(\pi_{V} g^{-1} D_ag)\\
{}&={\rm tr}(\pi_{V}  D_{a^\sigma})\\
{}& = h_{{a}^\sigma}(V). 
\end{align*}

We now prove (\ref{mug}):

\begin{align*}
 \mu(gV)  & = \mu(g\pi_V g^{-1})\\
{}& =(\ldots, \langle  g\pi_V g^{-1}(e_i), e_i \rangle, \ldots ) \\
{}& =(\ldots, \langle \pi_V g^{-1}(e_i), g^{-1}(e_i) \rangle, \ldots )\\
{}&=(\ldots, \langle \pi_V (e_{{\sigma^{-1}i}}), e_{{\sigma^{-1}i}} \rangle, \ldots)\\
{}&=\mu(V)^{\sigma^{-1}}.
\end{align*}

Item (ii)  is a straightforward consequence of eq.~(\ref{ha}).

\end{proof} 

\begin{rem}\label{ambig2} 
For a given $a\in \RR^n$ there are in general several ways to choose $\sigma$ which lead 
to the same $a^\sigma$, where the latter satisfies the condition  (\ref{orfera}). However, one can easily see that the
corresponding 
$gM_{[u]}^{a^\sigma}$ and $gS_{[u]}^{a^\sigma}$ are independent of the choice of $\sigma$ (and implicitly of $g$). 
\end{rem}

\section{The Grassmannian ${\rm Gr}_k(\RR^n)$ as a real locus} \label{sect:real}

In this section we  collect results from \cite{Gu} and \cite{Ba} which will
be used in the proof of Thm.~\ref{mainthm}. 
We  need to involve the complex Grassmannian
${\rm Gr}_k(\CC^n)$, whose elements are the complex $k$-dimensional vector subspaces of $\CC^n$.
Just like its real version, it is convenient to embed it into the space of
complex Hermitean $n\times n$ matrices, via the assignment
${\rm Gr}_k(\CC^n) \ni V \mapsto \pi_V$, the latter being the linear endomorphism of $\CC^n$
given by projecting $\CC^n$ to $V$ orthogonally relative to the standard Hermitean inner product of $\CC^n$.  
Consider the torus 
$$T^n:=\{ {\rm Diag}(z_1, \ldots, z_n) \mid z_j \in \CC, |z_j|=1, j=1, \ldots, n \},$$
along with its canonical action on $\CC^n$ by $\CC$-linear automorphisms and the induced action on the complex Grassmannian.
Another way to see this is by considering the action of $T^n$ on the space of Hermitean $n\times n$ matrices
given by matrix conjugation and restrict it to ${\rm Gr}_k(\CC^n)$, this being an invariant subspace. 
The complex Grassmannian has a canonical symplectic structure and the action of $T^n$ described above turns out to be Hamiltonian.
 A moment map is $\mu: {\rm Gr}_k(\CC^n) \to \RR^n$,
 which associates to $V$ in the domain the diagonal component of $\pi_V$, as explained for instance in \cite[Sect.~4]{Gu}.
 Note that ${\rm Gr}_k(\RR^n)$ is contained in ${\rm Gr}_k(\CC^n)$ in such a way that
 the map $\mu$ on the former space which was defined in Sect.~\ref{sect1}  is the
 restriction of the aforementioned  moment map: this is why denoting them both by
 $\mu$ should not create any confusion. 

The embedding ${\rm Gr}_k(\RR^n) \subset {\rm Gr}_k(\CC^n)$ is for us an essential ingredient.
It actually arises as the ``real locus" of the complex Grassmannian relative to
the natural complex structure. More specifically, the complex conjugation map
$\tau: \CC \to \CC$, $\tau(z)=\bar{z}$, for $z\in \CC$, induces an involutive automorphism
of ${\rm Gr}_k(\CC^n)$ which is also denoted by $\tau$ and whose fixed point set is 
${\rm Gr}_k(\RR^n)$.  
 (We refer to \cite[Example 2.2]{Ba} for more  details.)  
This fact has important consequences. For example, the images of both the complex and the
real Grassmannians under $\mu$ are equal, i.e.~$\mu({\rm Gr}_k(\RR^n)) =\mu ({\rm Gr}_k(\CC^n))=\PP_{n,k}$.
  
As indicated in Thm.~\ref{mainthm}, we are interested in the preimage under $\mu$ of a certain vector $d\in \PP_{n,k}$.   
 Consider the function
 $f:{\rm Gr}_k(\CC^n) \to \RR$, $$f(V) = \|\mu(V)-d \|^2, \ V\in {\rm Gr}_k(\CC^n),$$
 where the norm is the one arising from the standard inner product on $\RR^n$.  
 The restriction of $f$ to ${\rm Gr}_k(\RR^n)$ is denoted by 
 $f_{\rm r}$.  To describe the critical set of $f_{\rm r}$ and the induced stratification of the real Grassmannian, we need the height functions 
 $h_a:  {\rm Gr}_k(\RR^n) \to \RR$, $a\in \RR^n$, defined by eq.~(\ref{hadef}) and the description of ${\rm Crit}(h_a)$ given by Prop.~\ref{critgen} (ii). 
 
\begin{thm}\label{stratif} \begin{itemize} 
\item[(i)] The critical set of $f_{\rm r}$ is the union of all spaces of the form
$\mu^{-1}(d+a) \cap {\rm Crit}(h_a)$ where $a\in \RR^n$. 
There are only finitely many $a \in \RR^n$ and $u\subset \{1, \cdots, n\}$ with $k$ elements such that
$\mu^{-1}(d+a) \cap gM_{[u]}^{{a}^\sigma}$ is non-empty. For any such $a$ and $u$, there is a submanifold
$\Sigma_{a, [u]}$  of ${\rm Gr}_k(\RR^n)$ which contains $\mu^{-1}(d+a) \cap gM_{[u]}^{{a}^\sigma}$
as a deformation retract. These submanifolds induce a stratification 
\begin{equation}\label{stratsigma}{\rm Gr}_k(\RR^n)= \bigsqcup \Sigma_{a, [u]}.\end{equation}
The codimension of $\Sigma_{a,[u]}$ in
${\rm Gr}_k(\RR^n)$ is equal to the index of $h_a$ along $gM_{[u]}^{{a}^\sigma}$, that is, the codimension of
$gS_{[u]}^{a^\sigma}$.
\item[(ii)] There is exactly one stratum $\Sigma_{a,[u]}$ of codimension zero, namely 
the one corresponding to $a=(0, \ldots, 0)$ 
and $u$  any subset with $k$ elements of\footnote{Cf.~also Rem.~\ref{ambig}.}
$\{1, \ldots, n\}$.  Moreover,  $M_{[u]}^{(0, \ldots, 0)}$ is the whole ${\rm Gr}_k(\RR^n)$ and hence 
$\Sigma_{(0, \ldots, 0),[u]}$ contains $\mu^{-1}(d)$ as a deformation retract. 
\end{itemize} 
\end{thm}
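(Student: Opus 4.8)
The plan is to derive Theorem~\ref{stratif} as a direct translation, via the real-locus formalism of Baird and Heydari \cite{Ba}, of the Morse-theoretic stratification associated to the function $f = \|\mu(\cdot)-d\|^2$ on the symplectic manifold ${\rm Gr}_k(\CC^n)$ with its Hamiltonian $T^n$-action. The starting point is the well-known Kirwan-type description of ${\rm Crit}(f)$ on the complex side: a point $V$ is critical for $f$ exactly when $\mu(V)-d$ lies in the (co-)direction along which $V$ is fixed infinitesimally, i.e.\ when $V$ is a critical point of the component $\langle \mu, \mu(V)-d\rangle$ of the moment map; writing $a := \mu(V)-d$, this says $V \in \mu^{-1}(d+a)\cap {\rm Crit}(h_a)$, which is the content of part (i)'s first sentence. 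I would first record this for ${\rm Gr}_k(\CC^n)$ and then invoke \cite{Ba} to pass to the real locus ${\rm Gr}_k(\RR^n)$: Baird--Heydari show that a $\tau$-invariant minimally degenerate (or Morse--Bott--Kirwan) function on a symplectic manifold restricts to one on the real locus, with critical set ${\rm Crit}(f)\cap {\rm Gr}_k(\RR^n)$ and with each real stratum the fixed locus of $\tau$ on the corresponding complex stratum, of the \emph{same codimension} over $\RR$ as the complex stratum has over $\CC$. Since the complex Morse stratum retracting onto the critical submanifold $M^a_{[u]}\subset {\rm Gr}_k(\CC^n)$ is the complex analogue of the stable manifold, and the height function $h_a$ on the real Grassmannian is Morse--Bott (Prop.~\ref{critgen}(ii), Thm.~\ref{thmguest}) with stable manifolds $gS^{a^\sigma}_{[u]}$ of codimension equal to the Morse--Bott index along $gM^{a^\sigma}_{[u]}$, the real stratum $\Sigma_{a,[u]}$ is obtained by taking, near the critical set $\mu^{-1}(d+a)\cap gM^{a^\sigma}_{[u]}$, the intersection of $gS^{a^\sigma}_{[u]}$ with the flow-out/neighborhood produced by the gradient of $f_{\rm r}$; this is the submanifold deformation-retracting onto the critical piece, and its codimension equals ${\rm codim}(gS^{a^\sigma}_{[u]})$.

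Next I would address finiteness: only finitely many pairs $(a,[u])$ occur because $a = \mu(V)-d$ must be a value of $\mu$ minus the fixed vector $d$, and more restrictively $a$ must equal the ``$\mu$-value of a $T^n$-fixed-point-type configuration'' relative to $d$ — concretely $a^\sigma$ is constant on eigenspaces of $D_{a^\sigma}$ and its entries are determined by how $c_1,\dots,c_\ell$ (the partition data with $\sum c_i = k$, $0\le c_i\le \dim E_i$) distribute the weight, so there are finitely many combinatorial possibilities; nonemptiness of $\mu^{-1}(d+a)\cap gM^{a^\sigma}_{[u]}$ cuts this finite list down further. That the $\Sigma_{a,[u]}$ partition ${\rm Gr}_k(\RR^n)$ is exactly the statement that the (negative) gradient flow of $f_{\rm r}$ has every point converging to a unique critical submanifold — this is the Kirwan/Baird--Heydari stratification theorem, which I would cite rather than reprove.

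For part (ii): the codimension-zero stratum corresponds to the critical submanifold on which $f_{\rm r}$ attains its minimum and along which the Morse--Bott index vanishes, i.e.\ ${\rm codim}(gS^{a^\sigma}_{[u]}) = 0$, forcing $gS^{a^\sigma}_{[u]} = {\rm Gr}_k(\RR^n)$. By Thm.~\ref{thmguest} the stable manifold $S^{a^\sigma}_{[u]}$ of (\ref{stablemnf}) is all of ${\rm Gr}_k(\RR^n)$ precisely when the filtration condition $\dim V\cap(E_1\oplus\cdots\oplus E_i)=c_1+\cdots+c_i$ is automatic for all $i$, which — given $c_i \le \dim E_i$ and $\sum c_i=k$ — happens only when there is a single eigenspace, $\ell=1$, $E_1=\RR^n$; and $\ell=1$ with $D_{a^\sigma}$ having equal entries means (after the normalization $a_n\ge 0$ reduction, or directly) that $a^\sigma$, hence $a$, is constant, and since $a=\mu(V)-d$ with $\mu(V)\in\PP_{n,k}$ and $d\in\PP_{n,k}$ both summing to $k$, the constant must be $0$. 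Then Rem.~\ref{ambig} gives $M^{(0,\dots,0)}_{[u]}= S^{(0,\dots,0)}_{[u]} = {\rm Gr}_k(\RR^n)$ for every $u$, so $\mu^{-1}(d+0)\cap M^{(0,\dots,0)}_{[u]}=\mu^{-1}(d)$, and $\Sigma_{(0,\dots,0),[u]}$, being the codimension-zero stratum that deformation-retracts onto this critical piece, retracts onto $\mu^{-1}(d)$. Uniqueness of this stratum follows because any other $(a,[u])$ has either $a\ne 0$ or $\ell\ge 2$ (some nontrivial splitting forced by $a$), hence $gS^{a^\sigma}_{[u]}\subsetneq {\rm Gr}_k(\RR^n)$ and positive codimension. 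The main obstacle, I expect, is not any single computation but making the invocation of \cite{Ba} precise: one must check that $f$ (equivalently $f_{\rm r}$) genuinely satisfies the hypotheses of Baird--Heydari's stratification theorem — minimal degeneracy in Kirwan's sense, or whatever regularity they require — for the \emph{real locus} statement to apply, and that the identification of the real stable manifolds with the $gS^{a^\sigma}_{[u]}$ of Thm.~\ref{thmguest} is compatible with their construction; this is the step where one is really using that the height functions $h_a$ on ${\rm Gr}_k(\RR^n)$ are themselves Morse--Bott with the explicitly known stable manifolds, so that the abstract stratification acquires the concrete combinatorial description above.
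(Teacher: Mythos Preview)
Your outline for part (i) is essentially the paper's own argument: describe ${\rm Crit}(f)$ on ${\rm Gr}_k(\CC^n)$ via Kirwan \cite[Lemma~3.12]{Ki}, pass to the real locus using \cite[Prop.~3]{Ba}, and identify the codimension of each $\Sigma_{a,[u]}$ with the Morse--Bott index of $h_a$ along $gM^{a^\sigma}_{[u]}$. Finiteness and the deformation-retract statements are likewise handled by citation in the paper, so your sketch is accurate there.

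Part (ii), however, has a genuine gap. You assert that ${\rm codim}\,gS^{a^\sigma}_{[u]}=0$ forces $gS^{a^\sigma}_{[u]}={\rm Gr}_k(\RR^n)$, and that this in turn forces $\ell=1$. Neither implication holds: a codimension-zero submanifold is merely open. For instance, with $\ell=2$, $c_1=0$, $c_2=k$ (and $m_2\ge k$), the stable manifold $\{V:V\cap E_1=0\}$ is open and dense but proper, and the index of $h_{a^\sigma}$ along the corresponding critical manifold $\{0\}\times{\rm Gr}_k(E_2)$ is zero. Your closing uniqueness clause (``$gS^{a^\sigma}_{[u]}\subsetneq{\rm Gr}_k(\RR^n)$ and positive codimension'') repeats the same conflation. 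What actually rules out such pairs is the non-emptiness constraint $\mu^{-1}(d+a)\cap gM^{a^\sigma}_{[u]}\neq\emptyset$ together with the specific way $a$ arises as $\mu-d$ on that set; you do not use this, and proving it directly would amount to reproving the relevant part of Kirwan's theory.

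The paper avoids the issue by working on the complex side: Kirwan's stratification of ${\rm Gr}_k(\CC^n)$ has \emph{exactly one} open stratum, the one retracting onto the minimum level $\mu^{-1}(d)$ --- this is part of Kirwan's theorem, since all other complex strata have positive (even) codimension. By \cite[Prop.~3(iv)]{Ba} the real and complex codimensions correspond, so the unique open complex stratum intersects ${\rm Gr}_k(\RR^n)$ in the unique open real stratum. The identification $a=0$ then follows because this stratum's critical piece lies in $\mu^{-1}(d)\cap\mu^{-1}(d+a)$. You should replace your direct combinatorial argument for (ii) with this citation-based one.
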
 
 
\begin{proof} 
(i) The critical set of $f:{\rm Gr}_k(\CC^n) \to \RR$ can be described using general 
formulas obtained by Kirwan in \cite{Ki}, which are also presented in \cite[Sect.~3.1]{Ba}. 
That description relies on the knowledge of  the critical set of
the height functions on the {\it complex} Grassmannian $ {\rm Gr}_k(\CC^n)$. 
As already mentioned, this Grassmannian admits a canonical embedding into the space of
Hermitean matrices. The latter being equipped with the inner product described by eq.~(\ref{metric}),
one attaches to any $a\in \RR^n$ the height function defined by eq.~(\ref{hadef}), where $V$ is this time in 
the complex Grassmannian. 
Since this new function is an extension of the height function on the real Grassmannian, we will denote it again by
$h_a$. 
The critical set of $h_a: {\rm Gr}_k(\CC^n) \to \RR$ is described in \cite[Sect.~3]{Gu}. The formula is identical\footnote{We actually rely on the remark
made in \cite{Gu} on page 170.} to the
one in Prop.~\ref{critgen} (ii), where $\sigma$ and $g$ are  determined by $a$ by means of equations (\ref{orfera}) and  (\ref{gfera}).    
In turn, $M_{[u]}^{a^\sigma}$ is described by eq.~(\ref{mu}), with the only difference  that this time
$E_1, \ldots, E_\ell$ should be replaced by their complexifications
$E_1^c, \ldots, E_\ell^c$, which are complex vector subspaces of $\CC^n$, and 
${\rm Gr}_{c_1}(E_1^c), \ldots, {\rm Gr}_{c_\ell}(E_\ell^c)$ are complex Grassmannians.
Note that the intersection of the  product ${\rm Gr}_{c_1}(E_1^c)\times  \cdots \times  {\rm Gr}_{c_\ell}(E_\ell^c)$
with ${\rm Gr}_k(\RR^n)$ is just ${\rm Gr}_{c_1}(E_1)\times  \cdots \times  {\rm Gr}_{c_\ell}(E_\ell)$.

Now turning to the critical set of $f:{\rm Gr}_k(\CC^n) \to \RR$, by \cite[Lemma 3.12]{Ki} (cf.~also \cite[Sect.~3.1]{Ba}), this is the union of all non-empty
intersections $\mu^{-1}(d+a) \cap {\rm Crit}(h_a:{\rm Gr}_k(\CC^n) \to \RR)$, for  $a\in \RR^n$.
At the same time, by \cite[Prop.~3 (i)]{Ba}, the critical set of $f_{\rm r}:{\rm Gr}_k(\RR^n) \to \RR$
is equal to ${\rm Crit}(f)\cap {\rm Gr}_k(\RR^n)$. The description of ${\rm Crit}(f_{\rm r})$ stated above is now clear.
The finiteness claim follows from \cite[item K1 in Sect.~3.1]{Ba}.

 To prove the remaining part of item (i), one uses the results of \cite[Sect.~3]{Ki} for the moment map $\mu: {\rm Gr}_k(\CC^n) \to \RR^n$ 
to construct a Morse stratification for $f$. By intersecting each stratum with ${\rm Gr}_k(\RR^n)$
one obtains the desired stratification, see  
\cite[Prop.~3 (ii)]{Ba}. 
To prove the claim about the codimension of $\Sigma_{a,[u]}$ one uses
 \cite[Prop.~3 (iv)]{Ba}, \cite[item K3 in Sect.~3.1]{Ba}, and 
 the  fact that for a fixed $a\in \RR^n$, the index of $h_a : {\rm Gr}_k(\CC^n)\to \RR$
 along any of its critical manifolds is twice  the index of $h_a : {\rm Gr}_k(\RR^n)\to \RR$
 along  the real form of that critical manifold  (see the discussion above concerning the two critical sets). 
 The last claim follows from the remark made on page 170 in \cite{Gu}.
 
 (ii) In the Morse stratification constructed by Kirwan there is exactly one stratum of codimension zero
 in ${\rm Gr}_k(\CC^n)$, 
 namely the one corresponding to the minimum level of $f:{\rm Gr}_k(\CC^n) \to \RR$, which is the preimage 
 of $d$ under $\mu:{\rm Gr}_k(\CC^n) \to \RR^n$. Its intersection with ${\rm Gr}_k(\RR^n)$ is therefore the only
 stratum in (\ref{stratsigma}) of codimension zero in
 ${\rm Gr}_k(\RR^n)$, by \cite[Prop.~3 (iv)]{Ba}.  
 To identify the corresponding $a$ and $u$, one takes into account that the
 corresponding critical set is contained in $\mu^{-1}(d+a)$ and thus 
 the levels $\mu^{-1}(d)$ and $\mu^{-1}(d+a)$ have common points, which implies  $a=(0,\ldots, 0)$. 
\end{proof}

 \section{Proof of the main result}

The key result  is:

\begin{prop} \label{prop1ton}
Assume $d\in \PP_{n,k}$ satisfies the assumption (\ref{hypothesis}). If $a\in \RR^n$ and $u\subset \{1, \ldots, n\}$ with $k$ elements are such that 
$\mu^{-1}(d+a) \cap g M_{[u]}^{{a}^\sigma}$ 
is non-empty and the codimension of $\Sigma_{a,[u]}$ is strictly greater than 0
then the codimension of $\Sigma_{a,[u]}$  is at least equal to 2.
\end{prop}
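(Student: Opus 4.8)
The plan is to show that codimension $1$ is impossible by computing the codimension explicitly and showing the parity-or-size obstruction forced by hypothesis (\ref{hypothesis}). By Theorem \ref{stratif}(i), the codimension of $\Sigma_{a,[u]}$ equals the index of $h_a$ along the critical manifold $gM_{[u]}^{a^\sigma}$, i.e.\ the codimension of the stable manifold $gS_{[u]}^{a^\sigma}$. Since $g$ is a diffeomorphism, it suffices to work with $a^\sigma$, which we may assume (after renaming) is already ordered as in (\ref{orfera}): $a_1 \ge \cdots \ge a_n$. So first I would reduce to the ordered case and recall the eigenspace decomposition $\RR^n = E_1 \oplus \cdots \oplus E_\ell$ of $D_{a}$, with $E_j$ the eigenspace for the $j$-th largest eigenvalue value, $\dim E_j = m_j$, and the subset $u$ giving intersection numbers $c_j = \dim(V_u \cap E_j)$, $\sum c_j = k$.

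Next I would write down the index of $h_a$ along $M_{[u]}^{a^\sigma}$ as a sum of contributions from pairs of eigenspaces. The stable manifold (\ref{stablemnf}) imposes $\dim V \cap (E_1 \oplus \cdots \oplus E_i) = c_1 + \cdots + c_i$ for all $i$; the standard computation of the Morse--Bott index of a height function on a Grassmannian (as in \cite[Sect.~3.2]{Gu}) gives
\begin{equation*}
\operatorname{codim}\Sigma_{a,[u]} = \sum_{1 \le j < j' \le \ell} c_{j'}\,(m_j - c_j),
\end{equation*}
coming from the negative directions that move a piece of $V$ out of a lower eigenspace $E_{j'}$ into a higher one $E_j$ whenever the eigenvalues differ (note $a_j > a_{j'}$ strictly for $j < j'$ since they index distinct eigenvalues). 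Then I would observe that the codimension is strictly positive exactly when some term $c_{j'}(m_j - c_j)$ with $j < j'$ is positive, i.e.\ there exist $j < j'$ with $c_{j'} \ge 1$ and $c_j \le m_j - 1$, meaning $V_u$ meets $E_{j'}$ nontrivially while not filling $E_j$.

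The heart of the argument is to rule out the case where this sum equals $1$. Suppose it does; then there is exactly one pair $j < j'$ contributing, with $c_{j'}(m_j - c_j) = 1$, so $c_{j'} = 1$ and $m_j - c_j = 1$, and every other pair contributes $0$. I would now extract the combinatorial consequences: for any $j'' $ with $j < j'' $, either $c_{j''} = 0$ or $m_j - c_j = 0$; since $m_j - c_j = 1 \ne 0$, this forces $c_{j''} = 0$ for all $j'' > j$ except $j'' = j'$ where $c_{j'} = 1$. Similarly, looking at pairs with second index $j$, for all $i < j$ we need $c_j(m_i - c_i) = 0$; since $c_j = m_j - 1 \ge \dots$ — here I would split on whether $c_j = 0$ (then $m_j = 1$) or $c_j \ge 1$ (then $m_i = c_i$, i.e.\ $V_u \supseteq E_i$ for all $i < j$). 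Tracking this, the "defect" of $V_u$ relative to the greedy filling is extremely rigid: $V_u$ contains all of $E_1, \ldots, E_{j-1}$, contains an $(m_j-1)$-dimensional subspace of $E_j$, skips $E_{j+1}, \ldots$ entirely except for a single line in $E_{j'}$. Now I would use $\mu^{-1}(d+a) \cap gM_{[u]}^{a^\sigma} \ne \emptyset$: for a point $V$ in this set, $\mu(V) = d+a$ lies in the hypersimplex $\Delta_{n,k}$, so in particular $\sum_i (d_i + a_i) = k$ while $\sum d_i = k$ gives $\sum a_i = 0$; more importantly I would feed the explicit description of $\mu$ on the product of Grassmannians $M_{[u]}^{a^\sigma}$ — the coordinates of $\mu(V)$ in the block $E_i$ lie in the sub-hypersimplex $\Delta_{m_i, c_i}$ — into hypothesis (\ref{hypothesis}). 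Choosing the $n-k$ indices $i_1, \ldots, i_{n-k}$ to be precisely those coordinates where $V_u$ (hence generically $V$, or after averaging) has projection $0$, namely the complement of $V_u$, the inequality $d_{i_1} + \cdots + d_{i_{n-k}} \ge 1$ combined with the fact that on these coordinates $d_i + a_i$ sums to $n - k - k = n - 2k$ inside the appropriate faces, should contradict the rigidity above. The cleanest route: the codimension-$1$ configuration says $V_u^\perp$ (the $(n-k)$-dim complement spanned by the skipped coordinates) contains $E_{j+1}, \ldots, E_\ell$ minus the one line, plus a line in $E_j$; pairing hypothesis (\ref{hypothesis}) on an index set adapted to $E_j \cup E_{j'}$ against the two-block sub-hypersimplex $\Delta_{m_j, m_j - 1}$ forces $\ge 1$ on a sum that the eigenvalue ordering (and $a$ being a genuine displacement realized at a point of $\Delta_{n,k}$) caps below $1$.

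The main obstacle I anticipate is precisely this last step: converting (\ref{hypothesis}) into the right inequality on the $E_j$-block requires choosing the $n-k$ indices correctly and keeping careful track of how $\mu(V) = d + a$ distributes across blocks, since $a$ itself is only constrained by "$d+a \in \Delta_{n,k}$ and the intersection is non-empty." I expect one needs the sharper fact that on $M_{[u]}^{a^\sigma}$ the image under $\mu$ of the block $E_i$ is exactly $\Delta_{m_i, c_i}$ (translated), so that in the codimension-$1$ case the block $E_j$ contributes coordinates summing to $c_j = m_j - 1$ with each entry $\le 1$, hence at least $m_j - 1$ entries could be large and at most one small — and then (\ref{hypothesis}) applied to an $(n-k)$-set that includes the "small" coordinates of $E_j$ together with the skipped blocks yields the contradiction. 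Making the bookkeeping airtight, rather than any conceptual difficulty, is where the real work lies; everything else is the standard Morse-index calculation on Grassmannians plus the reduction via $g$.
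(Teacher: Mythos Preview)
Your index formula has the sign backwards. Moving a piece of $V$ from a lower-eigenvalue block $E_{j'}$ into a higher-eigenvalue block $E_j$ (with $j<j'$, so $b_j>b_{j'}$) \emph{increases} $h_a$; those are positive directions of the Hessian, not negative ones. The correct index, which by Theorem~\ref{stratif}(i) equals the codimension of $S_{[u]}^{a^\sigma}$, is
\[
\sum_{1\le i<j\le \ell} c_i\,(m_j-c_j),
\]
not $\sum_{j<j'} c_{j'}(m_j-c_j)$. Sanity check: for $\ell=2$, $m_1=2$, $m_2=1$, $k=1$, $c=(1,0)$ (the maximum of $h_a$ on ${\rm Gr}_1(\RR^3)$), the stable manifold described by~(\ref{stablemnf}) is ${\rm Gr}_1(E_1)$, of codimension $1$; your formula returns $c_2(m_1-c_1)=0$.

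This is not a harmless swap. Your combinatorial analysis of the codimension-$1$ case, run with your formula, forces $c_1+\cdots+c_r=k-1$ and $m_1+\cdots+m_r=k$ for some $r$. Plugging this into the block-trace identities coming from $d^\sigma+a^\sigma\in\mu(M_{[u]}^{a^\sigma})$ and using $b_1>\cdots>b_\ell$ via weighted averages yields $d^\sigma_{k+1}+\cdots+d^\sigma_n>1$ --- which is \emph{consistent} with hypothesis~(\ref{hypothesis}), not in contradiction with it. With the correct formula one instead obtains $c_1+\cdots+c_r=1$ and $m_1+\cdots+m_r=n-k$, and then the same averaging gives $d^\sigma_1+\cdots+d^\sigma_{n-k}<1$, the contradiction.

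That last step is exactly what you deferred as ``bookkeeping'', and it is the substance of the argument. The paper carries it out explicitly: non-emptiness of $\mu^{-1}(d+a)\cap gM_{[u]}^{a^\sigma}$ gives, for each block $E_i$, the trace identity $(\text{sum of the }d^\sigma\text{-entries over }E_i)+m_ib_i=c_i$; summing over $i\le r$ and over $i>r$ and comparing the two averages through $b_r>b_{r+1}$ produces the strict inequality above. Your alternative route --- picking the $n-k$ indices to be ``the complement of $V_u$'' and invoking sub-hypersimplices --- does not work as stated: once some $c_i\notin\{0,m_i\}$ there is no coordinate subset that is the complement of $V_u$, and hypothesis~(\ref{hypothesis}) must be applied to an honest $(n-k)$-element subset of $\{1,\ldots,n\}$, which the argument forces to be $\{\sigma(1),\ldots,\sigma(n-k)\}$.
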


 \begin{proof}
 Assume that the codimension of $\Sigma_{a,[u]}$ is equal to 1. 
 By Thm.~\ref{stratif} (i), the codimension of   $S_{[u]}^{{a}^\sigma}$ is equal to 1 as well.
 Write $a^\sigma$  as   
\begin{equation}\label{absigm}
(a_{\sigma(1)}, \ldots, a_{\sigma(n)})=(\underbrace{b_1, \ldots, b_1}_{m_1 },
\ldots, \underbrace{b_\ell, \ldots, b_\ell}_{m_\ell })\end{equation}
where $b_1> \cdots > b_\ell$.  
 Let $$\RR^n=E_1\oplus \cdots \oplus E_\ell$$
be the eigenspace decomposition of ${\rm Diag}(a_{\sigma(1)}, \ldots, a_{\sigma(n)})$,
where $\dim E_i=m_i$, $1\le i \le \ell$.
Obviously
\begin{align*}
{}& E_1 = {\rm Span} \{e_1, \ldots, e_{m_1}\},\\
{}& E_2 = {\rm Span} \{e_{m_1+1}, \ldots, e_{m_1+m_2}\},\\
{}& {\rm etc.}
\end{align*}
Recall that   
\begin{equation}\label{mva} M_{[u]}^{{a}^\sigma}={\rm Gr}_{c_1}(E_1) \times  \cdots \times {\rm Gr}_{c_\ell}(E_\ell),\end{equation}
 for some $0 \le c_i \le m_i$ with $c_1 +\cdots + c_\ell =k$. Also, the corresponding $S_{[u]}^{{a}^\sigma}$ is described by eq.~(\ref{stablemnf}).

 \noindent{ \it Claim.} There exists $r$ with $1 \le r \le \ell$ such that
 \begin{align}\label{c1}
 \left( c_1 + \cdots + c_r\right)\left(n-k-(m_1+ \cdots + m_r)+ c_1 + \cdots + c_r\right)=1. 
 \end{align}  
 
 To justify this claim, for each $1\le i \le \ell$ one considers 
 $$S_i:=\{V \in {\rm Gr}_k(\RR^n) \mid \dim V \cap (E_1\oplus \cdots \oplus E_i) =c_1 + \cdots +c_i\},$$
 which, according to~\cite[Thm.~4.1 (c)]{Wo}, is a  submanifold of 
 ${\rm Gr}_k(\RR^n)$ of codimension equal to $(c_1+\cdots + c_i)(n-k-(m_1+ \cdots + m_i)+c_1+\cdots + c_i)$.
Consider the sequence of inclusions
 $$S_1 \supseteq S_1 \cap S_2 \supseteq \cdots \supseteq S_1 \cap S_2 \cap \cdots \cap S_\ell.$$
 Observe that the last element of the chain above is just $S_{[u]}^{{a}^\sigma}$, which, as already mentioned, is a one-codimensional submanifold
 of  ${\rm Gr}_k(\RR^n)$. Let now $r$ be the smallest integer, $1\le r \le \ell$,  with the property that
 $S_1\cap \cdots \cap S_r$ is a one-codimensional submanifold of ${\rm Gr}_k(\RR^n)$. More precisely, if $r>1$ then for all $1 \le i \le r-1$, the space
 $S_1\cap \cdots \cap S_i$ is zero-codimensional, thus open in ${\rm Gr}_k(\RR^n)$.   Finally, take into account the formula for the codimension of 
 $S_r$ mentioned above. The claim is proved.

 Now eq.~(\ref{c1}) implies that both factors in the left hand side are equal to 1, thus
\begin{align*}
{}& c_1 + \cdots + c_r=1 \ {\rm and} \\
{}& m_1+\cdots + m_r=n-k.
\end{align*} 
Also observe that $r$ cannot be equal to $\ell$, since
 $c_1 + \cdots + c_\ell =k$, which is at least 2. From the previous two equations, 
\begin{align*}
{}& c_{r+1} + \cdots + c_\ell=k-1 \ {\rm and} \\
{}& m_{r+1}+\cdots + m_\ell=k.
\end{align*}

 On the other hand,  the condition  $\mu^{-1}(d+a) \cap g M_u^{{a}^\sigma}\neq \emptyset$ is equivalent to
 $d+a \in \mu(g M_u^{{a}^\sigma})$ and then, by means of eq.~(\ref{mug}), to
 $$d^\sigma+a^\sigma\in \mu( {\rm Gr}_{c_1}(E_1) \times  \cdots \times {\rm Gr}_{c_\ell}(E_\ell)).$$
Hence
 \begin{equation}\label{asigma}d^\sigma+a^\sigma = \mu(V) \ {\rm where} \ V=V_1\oplus \cdots \oplus V_\ell,\end{equation}
$V_i$ being a linear subspace of $E_i$ of dimension $c_i$, for all $1\le i \le r$.  
 But then $\pi_V$ decomposes as the direct sum of the  maps
 $\pi_{V_i}: E_i \to E_i$, $1\le i\le \ell$. Consequently,
 $\mu(V)$ is a vector with $n$ components which are 
 the diagonal entries of the matrix of $\pi_{V_1}$ relative to the basis $e_1, \ldots, e_{m_1}$, followed by the diagonal entries of 
 the matrix of $\pi_{V_2}$ relative to the basis $e_{m_1+1}, \ldots, e_{m_1+m_2}$ etc.
 By equations (\ref{asigma}) and (\ref{absigm}), the diagonal of the matrix of $\pi_{V_1}$ is $(d^\sigma_1+b_1, \ldots d^\sigma_{m_1}+b_1)$. 
 But  $V_1\in {\rm Gr}_{c_1}(E_1)$, hence the trace of $\pi_{V_1}$ is $c_1$, that is  
 $d^\sigma_1+b_1+ \cdots +d^\sigma_{m_1}+b_1=c_1$, which gives:
 \begin{equation}\label{sigmam0}d^\sigma_1+ \cdots +d^\sigma_{m_1}+m_1b_1=c_1.\end{equation}
 Similarly,
 \begin{align}
 {}& \nonumber d^\sigma_{m_1+1}+ \cdots +d^\sigma_{m_1+m_2}+m_2b_2=c_2\\
 {}&\nonumber  \cdots\\
 {}& \label{sigmam2} d^\sigma_{m_1+ \cdots + m_{r-1}+1} + \cdots + d^\sigma_{m_1+ \cdots + m_{r-1}+m_r}+m_rb_r=c_r.
 \end{align}
 By adding up the last equations side by side and taking into account that $m_1+ \cdots + m_{r}=n-k$ and 
 $c_1+ \cdots + c_r=1$, one obtains
 \begin{equation}\label{dsigma1}d^\sigma_1 + \cdots + d^\sigma_{n-k} +m_1b_1 + \cdots + m_rb_r=1.\end{equation} 
 The list of equations (\ref{sigmam0}) - (\ref{sigmam2}) can now be continued.
 By adding up the remaining equations side by side and taking into account that
$m_{r+1} + \cdots + m_\ell=k$ and
$c_{r+1} + \cdots + c_\ell = k-1$, one obtains
\begin{equation}\label{dsigma2}d^\sigma_{n-k+1} + \cdots + d^\sigma_{n}+ m_{r+1}b_{r+1} + \cdots + m_\ell b_\ell=k-1.\end{equation}

We now prefer to rewrite (\ref{dsigma1}) and (\ref{dsigma2}) as
\begin{align*}
{} & m_1b_1 + \cdots + m_rb_r=1-(d^\sigma_1 + \cdots + d^\sigma_{n-k})\\
{}& m_{r+1}b_{r+1} + \cdots + m_\ell b_\ell=k-1- (d^\sigma_{n-k+1} + \cdots + d^\sigma_{n}).
\end{align*} 
 Recall that $b_1>b_2> \cdots > b_\ell$. This implies that
 $$\frac{m_1b_1 + \cdots + m_rb_r}{m_1+ \cdots +m_r}\ge b_r > b_{r+1} \ge \frac{m_{r+1}b_{r+1} + \cdots + m_\ell b_\ell}{m_{r+1}+ \cdots +m_\ell}$$
 and thus
 $$\frac{1-(d^\sigma_1 + \cdots + d^\sigma_{n-k})}{n-k} > \frac{ k-1- (d^\sigma_{n-k+1} + \cdots + d^\sigma_{n})}{k}.$$
 On the other hand,
 $d^\sigma_1 + \cdots + d^\sigma_{n}=k$, hence $d^\sigma_{n-k+1} + \cdots + d^\sigma_{n}=k-(d^\sigma_1 + \cdots + d^\sigma_{n-k})$. 
 By substituting this in the previous inequality one immediately obtains
 $$d^\sigma_1 + \cdots + d^\sigma_{n-k}< 1.$$
 But this contradicts the assumption on $d$ made by eq.~(\ref{hypothesis}). This finishes the proof. 
\end{proof}

The proof of  our main result is now straightforward.

\vspace{0.2cm}

\noindent{\it Proof of Theorem \ref{mainthm}.} By Prop.~\ref{prop1ton}, all strata in the stratification (\ref{stratsigma}) 
are submanifolds of codimension at least 2, except $\Sigma_{(0,\ldots, 0),[u]}$. It follows that the latter is 
connected. At the same time, this stratum contains $\mu^{-1}(d)$ as a deformation retract, see Thm.~\ref{stratif} (ii).
Thus $\mu^{-1}(d)$ is connected as well. $\square$

\section{Two examples} 
We will be looking at ${\rm Gr}_2(\RR^4)$ with two choices of $d$ in the corresponding polytope $\PP_{4,2}$. 

\begin{ex} \label{exa} Let us first take $d=(1,1,0,0)$. Its pre-image under $\mu$ consists of all
2-planes $V$ in $\RR^4$ such that
\begin{align*}
{}& \langle \pi_V(e_1), e_1 \rangle =1\\
 {}& \langle \pi_V(e_2), e_2 \rangle =1\\
 {}& \langle \pi_V(e_3), e_3 \rangle =0\\
 {}& \langle \pi_V(e_4), e_4 \rangle =0
 \end{align*}
The first two conditions show that $e_1$ and $e_2$ are in $V$, in other words,  that $V$ is the span of $e_1$ and $e_2$.
So in this case $\mu^{-1}(d)$ is just a point in ${\rm Gr}_2(\RR^4)$, which is a connected subspace.
Note however that $d$ does not satisfy  assumption (\ref{hypothesis}).
\end{ex} 

\begin{ex} We now take $d=(1,\frac{1}{3},\frac{1}{3},\frac{1}{3})$. This time the 2-planes $V$ in $\mu^{-1}(d)$ are determined by:
\begin{align*}
{}& \langle \pi_V(e_1), e_1 \rangle =1\\
 {}& \langle \pi_V(e_2), e_2 \rangle =\frac{1}{3}\\
 {}& \langle \pi_V(e_3), e_3 \rangle =\frac{1}{3}\\
 {}& \langle \pi_V(e_4), e_4 \rangle =\frac{1}{3}
 \end{align*}
 From the first condition, $e_1\in V$. So $V$ is uniquely determined by its quotient $V/ \RR e_1$,
 which is a line, say $\ell$, in $\RR^4/ \RR e_1 \simeq \RR^3$ such that
 \begin{align*}
 {}& \langle \pi_\ell(e_2), e_2 \rangle =\frac{1}{3}\\
 {}& \langle \pi_\ell(e_3), e_3 \rangle =\frac{1}{3}\\
 {}& \langle \pi_\ell(e_4), e_4 \rangle =\frac{1}{3}.
 \end{align*}
It is an easy exercise to see that there are only four lines $\ell$ in $\RR^3$ which satisfy these conditions, namely $\RR( e_2\pm e_3 \pm e_4)$
(cf.~also \cite[Thm.~3.1]{DS}). 
So this time $\mu^{-1}(d)$ is not connected. As expected,
  assumption (\ref{hypothesis}) is  not satisfied by $d$.  
\end{ex}

\section{Application: connectivity of the space of real tight frames} \label{app:frames}
Recall that if $1\le k \le n$ are two integers then the Stiefel manifold ${\rm V}_k(\RR^n)$ is the space of all ordered sets of $k$ orthonormal vectors in
$\RR^n$. Out of any such set one produces a matrix, as follows: the rows are the coordinates of the vectors relative to the canonical basis of $\RR^n$.
In this way, ${\rm V}_k(\RR^n)$ becomes a submanifold of the real vector space ${\rm Mat}_{k\times n}(\RR)$ of $k\times n$ matrices with real entries. 
This is a consequence of the pre-image theorem in differential geometry, since a $k\times n$ matrix $F$ belongs to
 ${\rm V}_k(\RR^n)$ exactly if $$FF^t={\rm I}_k,$$ where the superscript $t$ indicates the matrix transposition. 
 
 We will also be interested in the map ${\rm V}_k(\RR^n)\to {\rm Gr}_k(\RR^n)$, which attaches to any $k$-tuple in the domain the linear subspace
 of $\RR^n$ spanned by it.
The map can be nicely expressed if we regard its domain and codomain as subspaces of   ${\rm Mat}_{k\times n}(\RR)$ and ${\rm Mat}_{n\times n}(\RR)$, respectively:
it is of the form \begin{equation}\label{map}F \mapsto F^tF,\end{equation} for any $F \in {\rm V}_k(\RR^n)$.
This map is a principal ${\rm O}(k)$-bundle. Here ${\rm O}(k)$ acts on ${\rm Mat}_{k\times n}(\RR)$ by matrix multiplication from the left,
and this action leaves ${\rm V}_k(\RR^n)$ invariant. 
Say now that the columns of an arbitrary $F \in {\rm Mat}_{k\times n}(\RR)$ are $f_1, \ldots, f_n$, which we indicate as
$F=[ \ f_1  \ | \ \ldots \ | \ f_n \ ]$. One is wondering what are the possible values of
$$d_1:=\|f_1\|^2, \ldots, d_n :=\|f_n\|^2,$$
when $F\in  {\rm V}_k(\RR^n)$. To answer this question, simply observe that these numbers are just the diagonal entries of the matrix $F^tF$, which is in
${\rm Gr}_k(\RR^n)$. Thus, a vector $d=(d_1, \ldots, d_n)$ arises in the way indicated above if and only if
it belongs to the hypersimplex $\Delta_{k,n}$.

Let us now fix $d=(d_1, \ldots, d_n)\in \Delta_{k,n}$. Recall that a $d$-NTF is a matrix
$$ F=[ \ f_1  \ | \ \ldots \ | \ f_n \ ] \in {\rm V}_k(\RR^n)$$
such that $\|f_j\|^2=d_j, {\rm \ for \ all \ } 1\le j \le n$. 
We denoted by ${\mathcal F}_{n,k}^d$ the collection of all such matrices $F$ and  endowed this space with the topology of subspace 
of ${\rm Mat}_{k \times n}(\RR)$.

\begin{dfn}\label{admis} We say that $d=(d_1, \ldots, d_n)\in \Delta_{n, k}$ is $k$-{\it admissible} if ${\mathcal F}_{n,k}^d$  is connected.\end{dfn}

 As mentioned in the introduction, the $n$-tuple $\left(\frac{k}{n}, \ldots, \frac{k}{n} \right)$ 
 is $k$-admissible if and only if $2\le k \le n-2$, as shown by  Cahill, Mixon, and Strawn, see \cite[Thm.~1.2]{CMS}.  
 The goal of this section is to present other values of $(d_1, \ldots, d_n)$  which are $k$-admissible.


We first make the connection  with the level sets addressed in the previous sections, by means of principal ${\rm O}(k)$-bundle
$ {\rm V}_k(\RR^n) \to {\rm Gr}_k(\RR^n)$ given by eq.~(\ref{map}). The image of $ {\mathcal F}_{n,k}^d$ is clearly 
just $\mu^{-1}\left(d_1, \ldots, d_n\right)$. Since $ {\mathcal F}_{n,k}^d$ is an ${\rm O}(k)$-invariant subspace of ${\rm V}_k(\RR^n)$,  one obtains a
 homeomorphism $$ {\mathcal F}_{n,k}^d/{\rm O}(k) \simeq \mu^{-1}\left(d_1, \ldots, d_n\right),$$
 where the quotient in the left hand side is relative to the  action of ${\rm O}(k)$  by left multiplication on 
  ${\rm Mat}_{k\times n}(\RR)$. 
  
{\bf Throughout the rest of this section we will assume that $d$ satisfies assumption (\ref{hypothesis}).}

 Consequently, by Thm.~\ref{mainthm}, the quotient $ {\mathcal F}_{n,k}^d/{\rm O}(k)$ is connected. But ${\rm O}(k)/{\rm SO}(k) \simeq \ZZ_2$, thus  the canonical map
 ${\mathcal F}_{n,k}^d/{\rm SO}(k) \to {\mathcal F}_{n,k}^d/{\rm O}(k)$ is a double covering. Since ${\rm SO}(k)$ is connected,
 it follows that  
 ${\mathcal F}_{n,k}^d$ has one or two connected components. Consequently, we have: 
 
\begin{lem} \label{lemeq}
The following conditions are equivalent:

(a) The space  ${\mathcal F}_{n,k}^d$ is connected.

(b) The space  ${\mathcal F}_{n,k}^d$ is path connected.

(c)  There exist  $F\in {\mathcal F}_{n,k}^d$,  $A \in {\rm O}(k)$ with determinant equal to $-1$, and
 a continuous path in ${\mathcal F}_{n,k}^d$ from $F$ to $AF$.
  

\end{lem}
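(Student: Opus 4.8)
The plan is to deduce the equivalence of (a), (b), and (c) from the preceding discussion, which established that $\mathcal{F}_{n,k}^d$ has either one or two connected components, and that these components are moreover path components (the Stiefel manifold being a manifold, hence locally path connected, and $\mathcal{F}_{n,k}^d$ being a closed subvariety cut out by smooth equations — in fact I would simply note that $\mathcal{F}_{n,k}^d$ is the total space of a principal $\mathrm{O}(k)$-bundle over a connected CW-complex, or more elementarily invoke local path connectedness of $\mathcal{F}_{n,k}^d$; this makes (a) $\iff$ (b) immediate). So the only substantive content is (a) $\iff$ (c).

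First I would prove (a) $\Rightarrow$ (c). Pick any $F \in \mathcal{F}_{n,k}^d$ and any $A \in \mathrm{O}(k)$ with $\det A = -1$; then $AF \in \mathcal{F}_{n,k}^d$ as well, since left multiplication by $\mathrm{O}(k)$ preserves both $FF^t = \mathrm{I}_k$ and the column norms $\|f_j\|^2 = d_j$. If $\mathcal{F}_{n,k}^d$ is connected it is path connected by the remark above, so there is a continuous path from $F$ to $AF$, which is exactly (c).

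Next I would prove (c) $\Rightarrow$ (a), which is the one place where the bundle structure does real work. Suppose $\mathcal{F}_{n,k}^d$ is disconnected; by the discussion above it then has exactly two connected components, say $\mathcal{C}_1$ and $\mathcal{C}_2$, and the double cover $\mathcal{F}_{n,k}^d/\mathrm{SO}(k) \to \mathcal{F}_{n,k}^d/\mathrm{O}(k) \cong \mu^{-1}(d)$ is then trivial, i.e.\ each $\mathcal{C}_i$ maps homeomorphically onto $\mu^{-1}(d)$ under the quotient by $\mathrm{SO}(k)$ — equivalently, each $\mathcal{C}_i$ is $\mathrm{SO}(k)$-invariant, while the matrix $A$ (with $\det A = -1$) swaps $\mathcal{C}_1$ and $\mathcal{C}_2$. [Here the key point: since $\mathrm{SO}(k)$ is connected and acts continuously, it cannot move a point out of its component, so each $\mathcal{C}_i$ is a union of $\mathrm{SO}(k)$-orbits; and since $\mathrm{O}(k)$ acts transitively on the fiber and $\mathcal{F}_{n,k}^d$ is $\mathrm{O}(k)$-invariant and disconnected, $A\cdot\mathcal{C}_1$ must be the other component.] Then for every $F \in \mathcal{F}_{n,k}^d$ and every $A' \in \mathrm{O}(k)$ with $\det A' = -1$, the points $F$ and $A'F$ lie in different components, so no path between them exists — contradicting (c). Hence (c) forces $\mathcal{F}_{n,k}^d$ to be connected.

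The main obstacle is pinning down the claim, used in the (c) $\Rightarrow$ (a) direction, that in the disconnected case the two components are exactly the two $\mathrm{SO}(k)$-saturated "sheets" of the double cover, with $A$ swapping them independently of the choice of base point $F$ and of $A$ with $\det A = -1$. This is really a statement about the monodromy of the double cover $\mathcal{F}_{n,k}^d/\mathrm{SO}(k) \to \mu^{-1}(d)$: it is either trivial (two components upstairs, $\mathcal{F}_{n,k}^d$ disconnected) or nontrivial ($\mathcal{F}_{n,k}^d$ connected), and the latter is precisely condition (c) reinterpreted as "there is a loop in $\mu^{-1}(d)$ with nontrivial monodromy." Once one accepts that $\mathcal{F}_{n,k}^d$ has at most two components and that they are $\mathrm{SO}(k)$-invariant — both already essentially in hand from the text — the rest is a short covering-space argument, and I would keep it short.
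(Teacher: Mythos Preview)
Your argument is correct and matches the paper's approach: the paper likewise treats (a)$\iff$(c) as a direct consequence of the preceding discussion (one or two components, ${\rm SO}(k)$ connected, the $\det=-1$ element swapping components in the disconnected case), and singles out (a)$\iff$(b) as the only point requiring justification. The one place where the paper is sharper is that it names the needed input explicitly---${\mathcal F}_{n,k}^d$ is a real algebraic variety, hence locally path connected by \L{}ojasiewicz's triangulation theorem---whereas your alternatives (``closed subvariety cut out by smooth equations'' or ``principal bundle over a CW-complex'') each tacitly rely on that same result, so you should cite it rather than leave the local path connectedness as an unargued assertion.
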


\begin{proof} The only assertion which still needs explanations concerns item (b): the main point is that ${\mathcal F}_{n,k}^d$ is a real algebraic variety and is therefore 
locally path connected, by a theorem of \L{}ojasiewicz \cite{Loj}.
\end{proof} 
  
Our goal is to identify classes of admissible vectors. The following two lemmas will be useful. The first one is a generalization of \cite[Prop.~7.2]{DS}.
 
 \begin{lem} \label{lem:DS} If ${\mathcal F}_{n,k}^d$ satisfies condition (c) in Lemma \ref{lemeq}, then 
 ${\mathcal F}_{n,n-k}^{1-d}$ satisfies it too. Here by $1-d$ we denoted the sequence\footnote{Note that
 $d\in \Delta_{n,k}$ if and only if $1-d\in \Delta_{n,n-k}$.} $(1-d_1, \ldots, 1-d_n)$.  
 \end{lem}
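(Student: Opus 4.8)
The plan is to construct the required path for $\mathcal{F}_{n,n-k}^{1-d}$ directly from the one for $\mathcal{F}_{n,k}^{d}$ by means of the orthogonal complement operation on subspaces, lifted appropriately to the level of frames. Concretely, suppose $F = [f_1 \mid \cdots \mid f_n] \in \mathcal{F}_{n,k}^{d}$, $A \in \mathrm{O}(k)$ with $\det A = -1$, and $t \mapsto F(t)$ is a continuous path in $\mathcal{F}_{n,k}^{d}$ from $F$ to $AF$. The geometric idea is that $\pi_{V(t)}$, where $V(t)$ is the row span of $F(t)$, equals $F(t)^t F(t)$, and the orthogonal projection onto the complement is $I_n - F(t)^t F(t)$. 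This matrix is symmetric, idempotent, of trace $n-k$, and its $j$-th diagonal entry is $1 - d_j$; so it defines a point of $\mu^{-1}(1-d) \subset \mathrm{Gr}_{n-k}(\RR^n)$. The first step is therefore to choose, continuously in $t$, a frame $G(t) \in \mathcal{F}_{n,n-k}^{1-d}$ with $G(t)^t G(t) = I_n - F(t)^t F(t)$.

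The second step is to make that continuous choice precise. Since $t \mapsto F(t)^t F(t)$ is a continuous path in $\mathrm{Gr}_k(\RR^n)$, the path $t \mapsto I_n - F(t)^t F(t)$ lies in $\mathrm{Gr}_{n-k}(\RR^n)$, and I want to lift it through the principal $\mathrm{O}(n-k)$-bundle $\mathcal{F}_{n,n-k}^{1-d} \to \mu^{-1}(1-d)$ described in Section \ref{app:frames}. Rather than invoking an abstract lifting theorem (which would only give a lift over $[0,1]$ after restricting to a component, and would not obviously control the endpoints), I would produce the lift explicitly: write $F(t) = [f_1(t) \mid \cdots \mid f_n(t)]$ and pick an explicit orthonormal basis of the complement of $V(t)$ depending continuously (even real-analytically, locally) on $t$ — for instance, by a Gram–Schmidt procedure applied to a fixed spanning family, or by using that $F(t)$ can be completed to an element of $\mathrm{O}(n)$ continuously in $t$ via a smooth section of the fibration $\mathrm{O}(n) \to \mathrm{V}_k(\RR^n)$ over a neighbourhood, patched along a partition of the interval. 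The rows of the last $n-k$ rows of such a completion give $G(t) \in \mathrm{V}_{n-k}(\RR^n)$ with $G(t)^t G(t) = I_n - F(t)^t F(t)$; in particular the squared norms of the columns of $G(t)$ are $1-d_1,\dots,1-d_n$, so $G(t) \in \mathcal{F}_{n,n-k}^{1-d}$, and $t\mapsto G(t)$ is continuous.

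The third step handles the endpoints. At $t=0$ we get some $G := G(0) \in \mathcal{F}_{n,n-k}^{1-d}$. At $t=1$ we have $F(1) = AF$, hence the row span $V(1)$ equals the row span of $F$, so $I_n - F(1)^tF(1) = I_n - F^tF$, i.e.\ $G(1)$ and $G$ have the same row span. Therefore $G(1) = BG$ for a unique $B \in \mathrm{O}(n-k)$. It remains to check $\det B = -1$. For this I would use a determinant/orientation bookkeeping argument: the map that sends an orthonormal $k$-frame to a chosen orthonormal completing $(n-k)$-frame, composed along the closed-up loop, relates $\det A$ and $\det B$. More carefully, consider the path $t\mapsto [\,F(t)^t \mid G(t)^t\,]$ in $\mathrm{O}(n)$ (an $n\times n$ orthogonal matrix whose first $k$ columns are the transposed rows of $F(t)$ and whose last $n-k$ columns are those of $G(t)$); its determinant is $\pm1$ and continuous, hence constant. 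At $t=1$ this matrix equals $\mathrm{diag}(A,B)$ times the $t=0$ matrix up to the reordering coming from $F(1)=AF$, $G(1)=BG$, which forces $\det(A)\det(B)$ to equal the value at $t=0$ divided by itself, i.e.\ $\det(A)\det(B) = 1$, so $\det B = \det A = -1$. Then $B \in \mathrm{O}(n-k)$, $\det B = -1$, and $t\mapsto G(t)$ is a continuous path in $\mathcal{F}_{n,n-k}^{1-d}$ from $G$ to $BG$, which is exactly condition (c) for $\mathcal{F}_{n,n-k}^{1-d}$.

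The main obstacle I anticipate is the second step — producing the continuous (global) lift $t\mapsto G(t)$ together with enough control to run the determinant argument in step three. Locally this is routine (smooth local sections of $\mathrm{O}(n)\to\mathrm{V}_k(\RR^n)$ exist since it is a principal bundle, and Gram–Schmidt is continuous), but one must patch the local choices over a subdivision $0=t_0<t_1<\cdots<t_N=1$ of the interval without introducing discontinuities, and then verify that the resulting determinant $\det[\,F(t)^t\mid G(t)^t\,]$ is genuinely constant so that the endpoint comparison pins down $\det B$. Once the lift is in hand, the endpoint identity $G(1)=BG$ and the sign computation are short.
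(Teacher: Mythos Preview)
Your proposal is correct and follows essentially the same route as the paper: lift the complementary path $t\mapsto I_n-F(t)^tF(t)$ to $\mathcal{F}_{n,n-k}^{1-d}$, stack $F(t)$ and $G(t)$ into an $n\times n$ orthogonal matrix, and use constancy of its determinant to force $\det B=-1$. The only difference is that the paper handles your ``main obstacle'' in one line by invoking the path-lifting property of the principal $\mathrm{O}(n-k)$-bundle $\mathcal{F}_{n,n-k}^{1-d}\to\nu^{-1}(1-d)$ (any fiber bundle over $[0,1]$ admits continuous lifts of paths with prescribed initial point), so your Gram--Schmidt patching is unnecessary.
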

 
 \begin{proof} First observe that the Grassmannians ${\rm Gr}_k(\RR^n)$ and ${\rm Gr}_{n-k}(\RR^n)$
 are both submanifolds of ${\rm Symm}_n(\RR)$ and the map ${\rm Gr}_k(\RR^n)\to {\rm Gr}_{n-k}(\RR^n)$,
 $$P\mapsto {\rm I}_n-P,$$
 $P\in {\rm Gr}_k(\RR^n)$, is a diffeomorphism. Let $\nu:{\rm Gr}_{n-k}(\RR^n) \to \RR^n$ be the projection to the diagonal entries.
 Relevant for us is the pre-image $\nu^{-1}\left(1-d_1, \ldots, 1-d_n\right)$, which is equal to the image
 of $\mu^{-1}\left(d_1, \ldots, d_n\right)$ under the diffeomorphism above. 
 Again, the map ${\mathcal F}_{n, n-k}^{1-d} \to {\rm Gr}_{n-k}(\RR^n)$, $G\mapsto G^tG$ induces
 a homeomorphism
 $${\mathcal F}_{n, n-k}^{1-d}/{\rm O}(n-k) \simeq \nu^{-1}\left(1-d_1,\ldots, 1-d_n\right).$$

 By the assumption in Lemma \ref{lemeq} (c), let $\gamma: [0,1]\to {\mathcal F}_{n,k}^d$ be a continuous path with $\gamma(0) = F$ and $\gamma(1)=AF$.
 It descends to a path $\bar{\gamma} : [0,1] \to \mu^{-1}\left(d_1,\ldots, d_n\right)$, which is closed:
 $$\bar{\gamma}(0)=\bar{\gamma}(1).$$
 Consider  $\bar{\delta} : [0,1] \to \nu^{-1}\left(1-d_1,\ldots, 1-d_n\right)$,
 $$\bar{\delta}(s)\stackrel{\rm def}{=}{\rm I}_n-\bar{\gamma}(s),$$
 $s\in [0,1]$. Let $\delta : [0,1] \to {\mathcal F}_{n, n-k}^{1-d}$ be an arbitrary lift of $\tilde{\delta}$ relative to the ${\rm O}(n-k)$-bundle ${\mathcal F}_{n, n-k}^{1-d}\to \nu^{-1}\left(1-d_1,\ldots, 1-d_n\right)$. That is, for all $s\in [0,1]$ one has
 $${\rm I}_n-\gamma(s)^t\gamma(s) = \delta(s)^t\delta(s),$$ which we prefer to rewrite as
 $$\gamma(s)^t\gamma(s) + \delta(s)^t\delta(s)={\rm I}_n,$$
 and deduce from this that 
 the ($n\times n$) matrix
$$ \eta(s)\stackrel{\rm def}{=}\begin{pmatrix}
\gamma(s)  \\
{} \\
\hline\\
\delta(s) 
\end{pmatrix}$$
is in ${\rm O}(n)$.

 Since $\bar{\delta}(0)=\bar{\delta}(1)$, we must have $\delta(1)=B\delta(0)$, for some $B\in {\rm O}(n-k)$.
 By showing that $\det B=-1$ the proof will be finished. 
The idea is to consider the path in ${\rm O}(n)$ given by
$s\mapsto \eta(s)\eta(0)^t$, for $s\in [0,1]$. 
Its values at $s=0$ and $s=1$ are $I_n$ and
$$\begin{pmatrix}
A\gamma(0)   \\
{} \\
\hline\\
B\delta(0) 
\end{pmatrix} \begin{pmatrix} \gamma(0)^t \ \bigg| \delta(0)^t\end{pmatrix}=  \begin{pmatrix}
A & 0\\
0 &B
\end{pmatrix}$$
respectively.
Thus the latter matrix is in ${\rm SO}(n)$, hence $\det B =-1$. 
 \end{proof}
 
The second lemma is related to  \cite[Lemma 4.2]{CMS}. 
 
\begin{lem}  \label{switch} Let $n', n''$ be two integers, both at least equal to $k$ and such that $n'+n''=n$.
Let also\footnote{Neither the numbers $i_1, \ldots, i_{n'}$ nor $j_1,\ldots, j_{n''}$ are necessarily
in increasing order.}  $\{i_1,\ldots, i_{n'}\}$ and $\{j_1,\ldots, j_{n''}\}$ be a partition of the set 
$\{1, \ldots, n\}$.  
Take $(d'_1,\ldots, d'_{n'})\in \Delta_{n',k}$ and $(d''_1,\ldots, d''_{n''})\in \Delta_{n'',k}$  
along with the frames  $G'\in {\mathcal F}_{n', k}^{d'_1,\ldots, d'_{n'}}$ and $G''\in {\mathcal F}_{n'',k}^{d''_{1}, \ldots, d''_{n''}}$.
Let $\alpha$ and $\beta$ be two positive numbers such that $\alpha^2+\beta^2=1$.
 Consider the
vectors $g_1,\ldots, g_n \in \RR^k$ given by
 \begin{align*}
{}& g_{i_r}=\alpha g'_r, \ {\it for \ all \ } 1 \le r \le n',\\
{}& g_{j_s}=\beta g''_s,  \ {\it for \ all \ } 1 \le s \le n'',
\end{align*}
and also the numbers $$d_1=\|g_1\|^2,\ldots,  d_n=\|g_n\|^2.$$ Define  
$$F_0\stackrel{}{=}[g_1 \ | \ g_2 \ | \ \ldots \ | \ g_n]$$ which is in ${\mathcal F}^{d_1,\ldots, d_n}_{n, k}$. 
Assume that $d_{i_1}=d_{j_1}$. If $F_1$ denotes the $(d_1,\ldots, d_n)$-NTF obtained from $F_0$ by interchanging the columns 
$g_{i_1}$ and $g_{j_1}$, then there exists a continuous path in ${\mathcal F}^{d_1,\ldots, d_n}_{n, k}$
from $F_0$ to $F_1$.

\end{lem}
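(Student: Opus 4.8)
The plan is to use the hypothesis $d_{i_1}=d_{j_1}$ to make the two columns that are to be interchanged \emph{literally equal}, after which the interchange is the identity map and the two frames $F_0,F_1$ will have been connected to one and the same frame. Since $\|\alpha g'_1\|^2=d_{i_1}=d_{j_1}=\|\beta g''_1\|^2$, we have $\alpha\|g'_1\|=\beta\|g''_1\|$; I would first discard the trivial case in which this common value is $0$ (then $g'_1=g''_1=0$ because $\alpha,\beta>0$, so $F_0=F_1$) and assume $g'_1,g''_1\neq 0$. Because $k\ge 2$ — which holds under the standing assumption of this section that $d$ satisfies (\ref{hypothesis}); note that for $k=1$ the statement is in fact false — the group $\mathrm{SO}(k)$ acts transitively on the unit sphere of $\RR^k$, so I can choose $R\in\mathrm{SO}(k)$ with $R\bigl(g''_1/\|g''_1\|\bigr)=g'_1/\|g'_1\|$, equivalently $\beta R g''_1=\alpha g'_1$ (the scalars match since $\alpha\|g'_1\|=\beta\|g''_1\|$); and since $\mathrm{SO}(k)$ is path-connected, I can pick a continuous path $t\mapsto R_t$ in $\mathrm{SO}(k)$ with $R_0=\mathrm{id}$ and $R_1=R$.

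Next I would rotate the entire ``$\beta G''$-block'' of $F_0$ by $R_t$: for $t\in[0,1]$ let $H_t$ be the $k\times n$ matrix whose column in position $i_r$ is $\alpha g'_r$ ($1\le r\le n'$) and whose column in position $j_s$ is $\beta R_t g''_s$ ($1\le s\le n''$), so that $H_0=F_0$. Using $G'(G')^t=\sum_r g'_r(g'_r)^t=\mathrm I_k$, $G''(G'')^t=\sum_s g''_s(g''_s)^t=\mathrm I_k$ and $R_t\in\mathrm O(k)$, one checks that $H_tH_t^t=\alpha^2\mathrm I_k+\beta^2 R_t\mathrm I_k R_t^t=\mathrm I_k$ and that the column norms are $\alpha\|g'_r\|=\sqrt{d_{i_r}}$ and $\beta\|R_tg''_s\|=\beta\|g''_s\|=\sqrt{d_{j_s}}$; hence $t\mapsto H_t$ is a continuous path in $\mathcal F^{d_1,\ldots,d_n}_{n,k}$. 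At $t=1$ the columns of $H_1$ in positions $i_1$ and $j_1$ are both equal to $\beta R g''_1=\alpha g'_1$.

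Applying the identical construction to $F_1$ — in which the columns of the form $\beta g''_s$ now occupy the positions $i_1,j_2,\dots,j_{n''}$, with $\beta g''_1$ sitting in position $i_1$ — yields a continuous path $t\mapsto H'_t$ in $\mathcal F^{d_1,\ldots,d_n}_{n,k}$ with $H'_0=F_1$, obtained by rotating that block by $R_t$ (the verification that $H'_t$ stays in $\mathcal F^{d_1,\ldots,d_n}_{n,k}$ is identical). Comparing $H_1$ and $H'_1$ column by column — positions $i_r$ with $r\ge 2$ carry $\alpha g'_r$ in both, positions $j_s$ with $s\ge 2$ carry $\beta R g''_s$ in both, and positions $i_1,j_1$ carry $\alpha g'_1=\beta R g''_1$ in both — shows that $H_1=H'_1$. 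Concatenating the path $t\mapsto H_t$ with the reverse of the path $t\mapsto H'_t$ then produces a continuous path in $\mathcal F^{d_1,\ldots,d_n}_{n,k}$ from $F_0$ to $F_1$, as required.

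The one point I expect to require genuine care is this last bookkeeping step: after the interchange the two scaled frames $\alpha G'$ and $\beta G''$ no longer occupy complementary index sets cleanly — they ``cross'' at positions $i_1$ and $j_1$ — and the identity $H_1=H'_1$ is precisely the formal expression of the idea that once the $i_1$- and $j_1$-columns have been made equal, swapping them changes nothing. A secondary point worth stating explicitly is where $k\ge 2$ enters: it is exactly transitivity of $\mathrm{SO}(k)$ on the unit sphere that allows $g''_1$ to be rotated onto the ray through $g'_1$ by an element connected to the identity, and this is indispensable (for $k=1$ one can have $\alpha g'_1=-\beta g''_1$ with $\mathcal F^{d}_{n,1}$ totally disconnected).
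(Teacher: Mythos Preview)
Your proof is correct and follows essentially the same approach as the paper's: both use the path-connectedness of $\mathrm{SO}(k)$ to rotate one of the two sub-blocks so that the columns in positions $i_1$ and $j_1$ become literally equal, after which the swap is trivial. The only cosmetic difference is that the paper rotates the $\alpha G'$-block (first by some $A\in\mathrm{SO}(k)$ sending $g_{i_1}$ to $g_{j_1}$, then, after re-identifying the blocks, by $A^t$), whereas you rotate the $\beta G''$-block once in $F_0$ and once in $F_1$ and then observe the endpoints coincide; your version is also more careful in isolating the degenerate case $g_{i_1}=g_{j_1}=0$ and in making explicit where $k\ge 2$ is used.
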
 

\begin{proof} Pick $A\in {\rm SO}(k)$ such that $A g_{i_1}=g_{j_1}$.
Let $F_2$ be the $k\times n$ matrix whose columns of order $i_1, \ldots, i_{n'}$ are $Ag_{i_1}, \ldots, A g_{i_{n'}}$, respectively, 
the others being the same as those of $F_0$. Note that $AG'\in  {\mathcal F}_{n', k}^{d'_1,\ldots, d'_{n'}}$ and thus $F_2 \in {\mathcal F}^{d_1,\ldots, d_n}_{n, k}$.
 Since ${\rm SO}(k)$ is connected,
there exists a continuous path in ${\mathcal F}^{d_1,\ldots, d_n}_{n, k}$ from $F_0$ to $F_2$. 
 The components of $F_2$ of indices  $j_1, i_2, \ldots, i_{n'}$ form an $\alpha$-multiple of a  $(d_{i_1},\ldots, d_{i_{n'}})$-NTF
 and the components of indices $i_1, j_2, \ldots, j_{n''}$ a $\beta$-multiple of a $(d_{j_1},\ldots, d_{j_{n''}})$-NTF.
 Thus $F_2$ can be connected
with the frame whose components of indices  $j_1, i_2, \ldots, i_{n'}$ are
$A^tg_{j_1}, A^t(Ag_{i_2}), \ldots, A^t(Ag_{i_{n'}})$,  respectively, the others being the same as those of $F_2$.
Observe that the latter frame is just $F_1$.
\end{proof}
  
  Here are some situations when ${\mathcal F}_{n,k}^d$ is connected.
  
  \begin{prop}\label{prop:first}  Assume $n=2p$ and take 
   $$d=(d_1,\ldots, d_p, d_1, \ldots, d_p)\in \Delta_{n,k}$$ which satisfies assumption (\ref{hypothesis}).
   
   (a) Assume $p\ge k$. If $d_i \le \frac{1}{2}$ for all $1\le i \le p$, then $d$ is $k$-admissible.
   
   (b) Assume $p<k$. If $d_i \ge \frac{1}{2}$ for all $1\le i\le p$, then $d$ is $k$-admissible. 
 \end{prop}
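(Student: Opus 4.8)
The plan is to establish part (a) by verifying condition (c) of Lemma \ref{lemeq} and then to deduce part (b) from part (a) via the duality Lemma \ref{lem:DS}. For (a), note first that since the two halves of $d$ coincide and $d_1+\cdots+d_n=k$ we have $d_1+\cdots+d_p=k/2$, so $d':=(2d_1,\ldots,2d_p)$ has coordinate sum $k$; as $0\le 2d_i\le 1$ (this is the hypothesis $d_i\le\tfrac12$) and $p\ge k$, we get $d'\in\Delta_{p,k}$. Choose a $d'$-NTF $G'=[\,g'_1\,|\,\cdots\,|\,g'_p\,]$, fix $A\in{\rm O}(k)$ with $A^2={\rm I}_k$ and $\det A=-1$ (for instance $A={\rm Diag}(-1,1,\ldots,1)$), and write $AG'=[\,Ag'_1\,|\,\cdots\,|\,Ag'_p\,]$. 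Put
$$F_0:=\tfrac{1}{\sqrt2}\,[\,g'_1\ |\ \cdots\ |\ g'_p\ |\ Ag'_1\ |\ \cdots\ |\ Ag'_p\,].$$
A short computation from $G'(G')^t={\rm I}_k$ gives $F_0F_0^t={\rm I}_k$, while the squared norms of the columns of $F_0$ are $d_1,\ldots,d_p,d_1,\ldots,d_p$, so $F_0\in{\mathcal F}^d_{n,k}$. Since $A^2={\rm I}_k$, the frame $AF_0$ is exactly $F_0$ with column $i$ and column $p+i$ interchanged for every $1\le i\le p$. By Lemma \ref{lemeq} it therefore suffices to join $F_0$ to $AF_0$ by a path inside ${\mathcal F}^d_{n,k}$.

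\emph{Constructing the path.} I would do this in $p$ stages, carrying out the transpositions $(1,p+1),(2,p+2),\ldots,(p,2p)$ one at a time. Suppose the first $m-1$ have been performed, giving $F_{m-1}\in{\mathcal F}^d_{n,k}$; its columns in positions $m,\ldots,p,p+1,\ldots,p+m-1$ equal $\tfrac{1}{\sqrt2}$ times $g'_m,\ldots,g'_p,g'_1,\ldots,g'_{m-1}$ (a reordering of $G'$, hence an NTF), and its columns in positions $1,\ldots,m-1,p+m,\ldots,2p$ equal $\tfrac{1}{\sqrt2}$ times $Ag'_m,\ldots,Ag'_p,Ag'_1,\ldots,Ag'_{m-1}$ (a reordering of $AG'$, hence an NTF). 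This exhibits $F_{m-1}$ in the shape demanded by Lemma \ref{switch}, with $n'=n''=p$, $\alpha=\beta=1/\sqrt2$, and the two index blocks listed so that position $m$ heads the first block and position $p+m$ heads the second; moreover $d_m=d_{p+m}$ because the two halves of $d$ agree, which is precisely the condition $d_{i_1}=d_{j_1}$ of that lemma. Lemma \ref{switch} then supplies a path in ${\mathcal F}^d_{n,k}$ from $F_{m-1}$ to $F_m$, the frame with $(m,p+m)$ additionally transposed. After $p$ stages $F_p=AF_0$, and concatenating the $p$ paths proves (a).

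\emph{Part (b).} I would simply apply part (a) to $1-d=(1-d_1,\ldots,1-d_p,1-d_1,\ldots,1-d_p)$, keeping $n=2p$ but with $n-k$ in place of $k$. Indeed $1-d\in\Delta_{n,n-k}$, it has the same symmetric shape, $p\ge n-k$ since $n-k=2p-k\le p$ (as $p<k$), and $1-d_i\le\tfrac12$ because $d_i\ge\tfrac12$; also $1-d$ satisfies (\ref{hypothesis}) for the pair $(n,n-k)$, since for distinct $i_1,\ldots,i_k$, passing to the complementary $n-k$ indices and using that $d$ satisfies (\ref{hypothesis}) gives $d_{i_1}+\cdots+d_{i_k}=k-\sum_{\text{rest}}d_i\le k-1$, that is $(1-d_{i_1})+\cdots+(1-d_{i_k})\ge 1$. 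Hence ${\mathcal F}^{1-d}_{n,n-k}$ is connected by (a), so it satisfies condition (c) of Lemma \ref{lemeq}; by Lemma \ref{lem:DS}, applied with $1-d$ and $n-k$ in place of $d$ and $k$, the space ${\mathcal F}^d_{n,k}={\mathcal F}^{1-(1-d)}_{n,n-(n-k)}$ satisfies (c) too, and Lemma \ref{lemeq} gives its connectedness.

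The step I expect to be the main obstacle is the inductive stage of part (a): one must keep accurate track of which columns of the current frame form the (suitably permuted) $G'$-block and which the $AG'$-block, and order the two index blocks so that the two columns about to be swapped occupy the leading slot of their respective blocks, since Lemma \ref{switch} exchanges only $i_1$ with $j_1$. The symmetry $d_i=d_{p+i}$ of $d$ is exactly what keeps the equal-norm requirement of that lemma satisfied at every stage.
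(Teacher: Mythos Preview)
Your proof is correct and follows essentially the same approach as the paper's: for (a) you build $F_0=\tfrac{1}{\sqrt2}[\,G'\,|\,AG'\,]$ from a $d'$-NTF $G'$ and a reflection $A$, then repeatedly invoke Lemma \ref{switch} to swap the two halves and reach $AF_0$, verifying condition (c) of Lemma \ref{lemeq}; for (b) you pass to $1-d$ and use Lemma \ref{lem:DS}. The only differences from the paper are cosmetic (the paper uses $D={\rm Diag}(1,\ldots,1,-1)$ rather than your $A={\rm Diag}(-1,1,\ldots,1)$, and it merely asserts that $1-d$ satisfies (\ref{hypothesis}) whereas you spell out the verification); one tiny slip is that in your inductive description the $AG'$-block at stage $m$ actually reads $Ag'_1,\ldots,Ag'_{m-1},Ag'_m,\ldots,Ag'_p$ in position order, not the cyclic order you wrote, but since only ``it is a reordering of $AG'$'' is needed this does not affect the argument.
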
 
  
 \begin{proof} (a) Observe  that
  $(2d_1,\ldots, 2d_p)$ is in  $\Delta_{p,k}$ and thus there exists  
   $G\in {\mathcal F}^{2d_1,\ldots, 2d_p}_{p,k}$. Let  $D$ be the $k\times k$ matrix whose entries are all 0 except those on the diagonal, where we
   have $(1,  \ldots, 1, -1)$. Denote $G^-=DG$ and observe that this matrix is obtained from $G$ by adding a minus sign to all of its entries
   on the last row. The $k\times n$ matrix
$$F=  \frac{1}{\sqrt{2}} [ \ G \ | \ G^- \ ]$$     
  is in ${\mathcal F}^{d}_{n,k}$. We now use Lemma \ref{switch} several times successively, each time for
  $\alpha=\beta=\frac{1}{\sqrt{2}}$, to join $F$  with $\frac{1}{\sqrt{2}} [ \ G^- \ | \ G \ ]$ 
  by a continuous path in    ${\mathcal F}^{d}_{n,k}$. But the latter matrix is just $DF$, and since $\det D=-1$, Lemma \ref{lemeq} implies that
   ${\mathcal F}^{d}_{n,k}$ is connected.
   
   (b)  We have $p > n-k$ and from the fact that $d\in \Delta_{n,k}$ it follows that $1-d \in \Delta_{n,n-k}$. Moreover, since $d$ satisfies
   assumption (\ref{hypothesis}),  $1-d$ satisfies it too, this time relative to $n-k$. Thus, by (a),
   $1-d$ is $(n-k)$-admissible and by Lemma \ref{lem:DS}, $d$ is $k$-admissible. 
 \end{proof}

 \begin{prop} \label{prop:second} Assume $n=2p+1$ and take 
$$d=(d_1, \ldots, d_p, d_1, \ldots, d_p, d_{2p+1})\in \Delta_{n,k}$$
which satisfies assumption (\ref{hypothesis}). 

(a) Assume  
 $p\ge k-1$. Fix $(d'_1, \ldots, d'_p) \in \Delta_{p,k-1}$ which  is $(k-1)$-admissible. 
 If
 $$d_1>\frac{1}{2} d'_1, \ldots , d_p>\frac{1}{2}d'_p$$
 then  $d$ is $k$-admissible.
   
   (b) Assume $p<k-1$. Fix $(d'_1, \ldots, d'_p) \in \Delta_{p,n-k-1}$ which  is $(n-k-1)$-admissible. 
 If
 $$1-d_1>\frac{1}{2} d'_1, \ldots , 1-d_p>\frac{1}{2}d'_p$$
 then  $d$ is $k$-admissible. \end{prop}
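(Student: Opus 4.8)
The plan is to prove part (a) by writing down a single explicit $d$-NTF and deforming it through a determinant$-1$ reflection, and then to obtain part (b) from part (a) by the duality $d\mapsto 1-d$ encoded in Lemma \ref{lem:DS}.

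\emph{Part (a).} I would split $\RR^k=\RR^{k-1}\oplus\RR e_k$. Since $(d'_1,\ldots,d'_p)$ is $(k-1)$-admissible, Lemma \ref{lemeq} supplies a $(d'_1,\ldots,d'_p)$-NTF $G'$ in $\RR^{k-1}$, a matrix $A'\in{\rm O}(k-1)$ with $\det A'=-1$, and a continuous path $\gamma'\colon[0,1]\to{\mathcal F}_{p,k-1}^{d'_1,\ldots,d'_p}$ with $\gamma'(0)=G'$ and $\gamma'(1)=A'G'$. Put $\beta_i:=\sqrt{d_i-\tfrac12 d'_i}$, which is real by the hypothesis $d_i>\tfrac12 d'_i$, and $\gamma:=\sqrt{d_{2p+1}}$, which is real because $d_{2p+1}\ge 0$. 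Using $\sum_{i=1}^p d'_i=k-1$ and $\sum_{j=1}^n d_j=k$ one checks that $d_{2p+1}=k-2\sum_{i=1}^p d_i$ and $2\sum_{i=1}^p\beta_i^2+\gamma^2=1$; these identities (and the nonnegativity just used) are the only arithmetic facts that genuinely need verification. Now let $F$ be the $k\times n$ matrix whose $i$-th column is $\bigl(\tfrac{1}{\sqrt{2}}g'_i,\beta_i\bigr)\in\RR^{k-1}\oplus\RR$ for $1\le i\le p$, whose $(p+i)$-th column is $\bigl(\tfrac{1}{\sqrt{2}}g'_i,-\beta_i\bigr)$ for $1\le i\le p$, and whose last column is $(0,\gamma)$, where $g'_i$ denotes the $i$-th column of $G'$. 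A block computation gives $FF^t=I_k$: the top-left block is $\sum_i g'_i(g'_i)^t=G'(G')^t=I_{k-1}$, the off-diagonal block $\sum_i\bigl(\beta_i\tfrac{1}{\sqrt{2}}g'_i-\beta_i\tfrac{1}{\sqrt{2}}g'_i\bigr)$ vanishes, and the bottom-right entry is $2\sum_i\beta_i^2+\gamma^2=1$; moreover the $j$-th column of $F$ has squared norm $d_j$, so $F\in{\mathcal F}^d_{n,k}$.

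The decisive observation is that this computation uses $G'$ only through $G'(G')^t=I_{k-1}$ and $\|g'_i\|^2=d'_i$; hence replacing $G'$ by $\gamma'(s)$ in all columns of $F$ produces, for every $s$, a matrix $F(s)\in{\mathcal F}^d_{n,k}$. Then $s\mapsto F(s)$ is a continuous path in ${\mathcal F}^d_{n,k}$ from $F$ to $AF$, where $A\in{\rm O}(k)$ acts as $A'$ on $\RR^{k-1}$ and fixes $e_k$, so $\det A=-1$. By Lemma \ref{lemeq}(c), $d$ is $k$-admissible, with no appeal to Lemma \ref{switch} needed. I expect the only genuinely delicate thing here to be hitting upon the right ansatz for $F$ — the $\RR^{k-1}\oplus\RR$ splitting with $\pm\beta_i$ in the last coordinate — after which everything is a routine block-matrix check.

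\emph{Part (b).} I would apply part (a) to the complementary data $(n,\,n-k,\,1-d)$. From $n=2p+1$ and $p<k-1$ one gets $k\ge p+2$, hence $p\ge(n-k)-1$. The vector $1-d=(1-d_1,\ldots,1-d_p,1-d_1,\ldots,1-d_p,1-d_{2p+1})$ lies in $\Delta_{n,n-k}$ and has the shape required by part (a); it also satisfies assumption (\ref{hypothesis}) relative to $n-k$, since a sum of $k$ of the entries $1-d_i$ equals $k$ minus a sum of $k$ of the $d_i$, i.e.\ equals a sum of the complementary $n-k$ of the $d_i$, which is $\ge 1$ by (\ref{hypothesis}) for $d$. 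Moreover $(d'_1,\ldots,d'_p)\in\Delta_{p,n-k-1}$ is $(n-k-1)$-admissible and $1-d_i>\tfrac12 d'_i$, so part (a) gives that $1-d$ is $(n-k)$-admissible. Thus ${\mathcal F}^{1-d}_{n,n-k}$ is connected, hence by Lemma \ref{lemeq} satisfies condition (c); by Lemma \ref{lem:DS} so does ${\mathcal F}^d_{n,k}$, and then Lemma \ref{lemeq} shows ${\mathcal F}^d_{n,k}$ is connected, i.e.\ $d$ is $k$-admissible. The main thing to be careful about in this last step is verifying that $1-d$ satisfies (\ref{hypothesis}) before invoking Lemma \ref{lemeq} for it, which is exactly the computation indicated above.
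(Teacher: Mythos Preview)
Your proof is correct and follows essentially the same strategy as the paper: build an explicit $d$-NTF out of a $(d'_1,\ldots,d'_p)$-NTF via a block decomposition $\RR^k=\RR^{k-1}\oplus\RR$, then use the $(k-1)$-admissibility of $d'$ to deform through a determinant~$-1$ element and invoke Lemma~\ref{lemeq}; part~(b) is the same duality reduction via Lemma~\ref{lem:DS}.

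The only noteworthy difference is cosmetic but worth a remark. The paper places the sign alternation in the \emph{top} block, writing
\[
F=\begin{pmatrix}\tfrac{1}{\sqrt2}\tilde G & -\tfrac{1}{\sqrt2}\tilde G & 0\\ g & g & \sqrt{d_{2p+1}}\end{pmatrix},
\]
and then connects $F$ to $DF$ (with $D=\mathrm{diag}(1,\ldots,1,-1)$) in two stages: first to $D'F$ via connectedness of $\mathrm{SO}(k)$, then from $D'F$ to $DF$ via a path $D''\tilde G\rightsquigarrow\tilde G$ in $\mathcal F_{p,k-1}^{d'}$. You instead put the sign alternation in the \emph{bottom} row, which lets the single path $\gamma'(s)$ in $\mathcal F_{p,k-1}^{d'}$ carry $F$ directly to $AF$ with $A=\mathrm{diag}(A',1)$, avoiding the intermediate $\mathrm{SO}(k)$ step. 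Your version is slightly more economical, but the underlying mechanism is identical.

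One small point of care: when you invoke Lemma~\ref{lemeq} for $d'$ to produce $G'$, $A'$, and $\gamma'$, you are using only the implication (a)$\Rightarrow$(c), which follows from \L{}ojasiewicz's theorem and does not require assumption~(\ref{hypothesis}) for $d'$. The converse direction, which you use for $d$ itself at the end, does require~(\ref{hypothesis}), and you have that by hypothesis. So there is no gap, but it is worth being explicit that Lemma~\ref{lemeq} is being applied to two different vectors in two different directions.
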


 \begin{proof}
(a) There exists a $(d'_1, \ldots, d'_p)$-NTF, of the form $\tilde{G}=(\tilde{g}_{ij})_{1\le i \le k-1, 1\le j \le p}$.
   One can find a vector $g=(g_{k1}, \ldots, g_{k p})$ such that
   $$d_1=\frac{1}{2}d'_1+g_{k1}^2, \ldots , d_p= \frac{1}{2}d'_p+g_{kp}^2.$$
  Define 
   \begin{align*}{}& F=\begin{pmatrix}
\frac{1}{\sqrt{2}}\tilde{g}_{11} & \ldots & \frac{1}{\sqrt{2}}\tilde{g}_{1p} & -\frac{1}{\sqrt{2}}\tilde{g}_{11} & \ldots & -\frac{1}{\sqrt{2}}\tilde{g}_{1p} &0 \\
{}& \vdots & {}& {}& \vdots & {} & \vdots\\
\frac{1}{\sqrt{2}}\tilde{g}_{k-11} & \ldots & \frac{1}{\sqrt{2}}\tilde{g}_{k-1p} & -\frac{1}{\sqrt{2}}\tilde{g}_{k-11} & \ldots & -\frac{1}{\sqrt{2}}\tilde{g}_{k-1p} &0 \\
g_{k1} & \ldots & g_{kp} & g_{k1} & \ldots & g_{kp} & \sqrt{d_{2p+1}} 
\end{pmatrix}\\
{}& =\begin{pmatrix}
\frac{1}{\sqrt{2}}\tilde{G} & -\frac{1}{\sqrt{2}}\tilde{G} & 0 \\
g & g & \sqrt{d_{2p+1}} 
\end{pmatrix}.
\end{align*}
 Note that the  squared entries on the last row add up to
 $$2(d_1-\frac{1}{2}d'_1 + \ldots + d_p-\frac{1}{2}d'_p)+d_{2p+1} =k-(k-1)=1.$$
 Thus $F$  is a $d$-NTF.  
 Consider the following three diagonal matrices:
 \begin{itemize}
 \item $D$ of size $k\times k$ which has on the diagonal 
 $(1, \ldots, 1,  -1)$;
 \item $D'$ of size $k\times k$ which has on the diagonal 
 $(1, \ldots, 1, -1, -1)$; 
 \item $D''$ of size $(k-1)\times (k-1)$ which has on the diagonal 
 $(1, \ldots, 1,  -1)$.
 \end{itemize}
The goal is to show that $F$ can be joined with $DF$ by a path in ${\mathcal F}_{n,k}^d$. 
First, since $D'\in {\rm SO}(k)$, one can join $F$ with $D'F$. The latter matrix is
$$D'F= \begin{pmatrix}
\frac{1}{\sqrt{2}}D''\tilde{G} & -\frac{1}{\sqrt{2}}D''\tilde{G} & 0 \\
-g & -g & -\sqrt{d_{2p+1}} 
\end{pmatrix}.
$$ 
Since $(d'_1, \ldots, d'_p)$ is $(k-1)$-admissible, one can join 
$D''\tilde{G}$ with $\tilde{G}$ by a continuous path in ${\mathcal F}_{p, k-1}^{d'_1, \ldots, d'_p}$.
Thus we will be able to connect $D'F$ with 
 $$ \begin{pmatrix}
\frac{1}{\sqrt{2}}\tilde{G} & -\frac{1}{\sqrt{2}}\tilde{G} & 0 \\
-g & -g & -\sqrt{d_{2p+1}} 
\end{pmatrix},
$$ 
within ${\mathcal F}_{n,k}^d$. Since 
the latter matrix is $DF$, the proof is finished.

(b) From $p< k-1$ one deduces $p>n-k$, thus $p>n-k-1$. One uses item (a) for $n-k$ instead of $k$ and
$1-d$ instead of $d$. First note that $1-d \in \Delta_{n, n-k}$ and satisfies assumption (\ref{hypothesis}). 
It follows that $1-d$ is $n-k$-admissible.   To conclude, one uses Lemma \ref{lem:DS}, which implies that
$d$ is $k$-admissible. 
 \end{proof}

We will now illustrate Propositions \ref{prop:first} and \ref{prop:second}  by some examples.
  
  \subsection{The  case when $d_1= \cdots = d_n$} We first show how to deduce 
  the theorem of Cahill, Mixon, and Strawn \cite{CMS}, which asserts that for all $2\le k \le n-2$, the $n$-tuple 
  $(\frac{k}{n}, \ldots, \frac{k}{n})$ is $k$-admissible. For brevity we will express this by saying that {\it the pair $(n,k)$ is  admissible}.   
  First remark that in this case assumption (\ref{hypothesis}) is satisfied, see Ex.~\ref{example1}. We will gradually prove the general connectedness result
  by looking at various particular values of $(n, k)$. The case when $n$ is even follows by a direct application of Prop.~\ref{prop:first}.
  {\it From now on we assume that $n$ is odd}. The strategy is to use Prop.~\ref{prop:second} to 
  reduce the proof to the cases $k=2$ and $n=2k+1$, which in turn will be dealt with by using results from \cite{DS}.

 {\it Step 0.} $k=2$. We rely on \cite[Thm.~7.4]{DS}.
 
 {\it Step 1.} $n=2k+1$. Start with the matrix $G$ constructed in \cite[Ex.~3.2]{DS}. 
  It has size $k\times (k+1)$ and is of the form
  $$G=\begin{pmatrix}
  g_{11}&-g_{11}& 0 & \ldots& 0& 0\\
  g_{21}& g_{21}& g_{23}& \ldots & g_{2 k} & 0 \\
  g_{31}& g_{31}& g_{33}& \ldots & g_{3 k} & 0 \\
   \vdots& \vdots& \vdots& \ldots & \vdots & \vdots \\
    g_{k1}& g_{k1}& g_{k3}& \ldots & g_{k k} & -1 
    \end{pmatrix}.$$
    (In \cite{DS}, the matrix is denoted by $F$, it has size $n \times (n+1)$ and is a ``spherical tight frame",
which means that its columns are all of length 1 and
    the rows are pairwise orthogonal and of  length equal to $\sqrt{\frac{n+1}{n}}$.)

   Set $$F=\sqrt{\frac{k}{2k+1}} \ [ \ G \ | \ {\rm I}_k  \ ],$$ which is in ${\mathcal F}^{\frac{k}{2k+1}, \ldots, \frac{k}{2k+1}}_{2k+1,k}$.
   Denote by $A$ the diagonal $k\times k$ matrix ${\rm Diag}(-1, 1, \ldots, 1)$ and show that $F$ can be joined with $AF$
   by a continuous path in ${\mathcal F}^{\frac{k}{2k+1}, \ldots, \frac{k}{2k+1}}_{2k+1,k}$.   To this end, observe that 
   $$AF= \sqrt{\frac{k}{2k+1}} \ [ \ AG \ | \ A \ ]$$ is obtained from
   $F$ by interchanging the first two columns in $G$ and multiplying the $(k+2)$nd column by $-1$.
   
   We first join $F$ with $ \sqrt{\frac{k}{2k+1}} \ [ \ AG \ | \ {\rm I}_k \ ]$. This can be done in three steps, as follows.
   Use Lemma \ref{switch} for the pair of frames $\sqrt{\frac{k}{k+1}}G$ and $I_k$ together with
       $\alpha=\sqrt{\frac{k+1}{2k+1}}$ and $\beta=\sqrt{\frac{k}{2k+1}}$ in order to join $F$ with the frame obtained from it by interchanging the first and the $(k+2)$nd columns.
     By a similar argument, the latter frame can be joined with the one obtained from it by interchanging  the first and the second columns.
   Finally, the frame just mentioned can be joined with the one obtained from it by interchanging the  second and the $(k+2)$nd columns.
The last frame  is just $\sqrt{\frac{k}{2k+1}} \ [ \ AG \ | \ {\rm I}_k \   ]$.  

We now join $ \sqrt{\frac{k}{2k+1}} \ [  \ AG \ | \ {\rm I}_k \ ]$ with $ \sqrt{\frac{k}{2k+1}} \ [ \ AG \ | \ A \ ]$. We first consider the path  described by

$$\sqrt{\frac{k}{2k+1}}\left(\begin{array}{ccccccccccc}
  -g_{11} &g_{11} & 0 & \ldots   & 0& \cos t &  \sin t & 0 & 0 &\ldots  &  0  \\
  g_{21}& g_{21}& g_{23}& \ldots & g_{2 k} & 0 & 0 & 1 &  0 & \ldots & 0 \\
  g_{31}& g_{31}& g_{33}& \ldots & g_{3 k} & 0 & 0 & 0 & 1 &  \ldots & 0\\
   \vdots & \vdots & \vdots & \ldots & \vdots & \vdots & \vdots & \vdots & \vdots & \vdots & \vdots   \\
    g_{k1}& g_{k1}& g_{k3}& \ldots & g_{k k} & -\sin t & \cos t & 0 & 0 & \ldots & 1 \\
    \end{array} \right) 
 $$     
  for $t$ between $\frac{\pi}{2}$ and $\pi$ to join $ \sqrt{\frac{k}{2k+1}} \ [ \ AG \ | \ {\rm I}_k \ ]$ with

$$\sqrt{\frac{k}{2k+1}}\left(\begin{array}{ccccccccccc}
  -g_{11} &g_{11} & 0 & \ldots   & 0& -1 &  0 & 0 & 0 &\ldots  &  0  \\
  g_{21}& g_{21}& g_{23}& \ldots & g_{2 k} & 0 & 0 & 1 &  0 & \ldots & 0 \\
  g_{31}& g_{31}& g_{33}& \ldots & g_{3 k} & 0 & 0 & 0 & 1 &  \ldots & 0\\
   \vdots & \vdots & \vdots & \ldots & \vdots & \vdots & \vdots & \vdots & \vdots & \vdots & \vdots   \\
    g_{k1}& g_{k1}& g_{k3}& \ldots & g_{k k} & 0 & -1 & 0 & 0 & \ldots & 1 \\
    \end{array} \right) 
 $$  

Use Lemma \ref{switch} to join this with the frame obtained from it by interchanging the $(k+1)$st and the $(k+2)$nd columns.

 {\it Step 2.} {\it The general situation.} By Lemma \ref{lem:DS} and Step 1, we may assume that $n>2k+1$ (because if $n< 2k+1$ then actually $n<2k-1$, which implies $n> 2(n-k)+1$).
 Prop.~\ref{prop:second} will bring us in a position to apply  Steps 0 and 1. 
 Set $(n_0, k_0)=(n,k)$. One constructs a sequence $(n_i, k_i)$, where $k_i\ge 2$ and $n_i \ge 2k_i +1$, for all $i\ge 0$, which is defined inductively as follows:
 
\begin{itemize}
\item If $n_i$ is even or $n_i=2k_i+1$  or $k_i=2$ , the sequence stops. 
If $n_i$ is odd and $n_i >2k_i+1>5$, construct $(n_{i+1}, k_{i+1})$ as follows.
\item  Observe that
$$\frac{n_i-1}{2} > k_i-1.$$ Set 
 $$n_{i+1}:=\frac{n_i-1}{2}, \quad k'_{i+1}:=k_i-1.$$
 \item  If  $n_i \ge 4k_i-1$ then $n_{i+1} \ge 2k'_{i+1}+1$ and we set
 $$k_{i+1}:=k'_{i+1}.$$ 
 \item  If  $n_i <4k_i-1$ then $n_{i+1} < 2k'_{i+1}+1$ and we set
 $$k_{i+1}:=n_{i+1}-k'_{i+1}.$$ 
 \end{itemize}
 Note that in this way $n_{i+1}\ge 2k_{i+1}+1$.  
  
  The sequence will clearly stop at some point.    
 The crucial fact is that if $(n_{i+1}, k_{i+1})$ is admissible then also both $(n_{i+1}, k'_{i+1})$ and $(n_i, k_i)$ are admissible.
 This follows readily from Lemma \ref{lem:DS}, Prop.~\ref{prop:second} (a),  and the following  elementary inequality:
 $$\frac{k_{i}}{n_{i}}>\frac{1}{2}\cdot \frac{k'_{i+1}}{n_{i+1}}.$$  
  
We now present examples of $d\in \Delta_{n,k}$ which are $k$-admissible and whose entries are not all equal.
They are of the type described in Ex.~\ref{example2}. We consider separately the cases when $n$ is even and $n$ is odd.   
  
 \subsection{Other examples: the case when $n$ is even} \label{sub1}
  Concretely, say that $n=2p$ and pick an integer number $q$ such that
 $1\le q < p$. Set $k:=2q$. Consider $d$ like in Prop.~\ref{prop:first}, where $d_1=\cdots = d_q=:\alpha$ and $d_{q+1}=\cdots = d_p=:\beta.$
 That is,
 $$d=(\alpha, \ldots, \alpha, \beta, \ldots, \beta, \alpha, \ldots, \alpha, \beta, \ldots, \beta),$$
 where  $\alpha$ and $\beta$  occur each time $q$ and $p-q$ times respectively.  
As stated in Ex.~\ref{example2}, any real number $\beta$
such that 
\begin{align*}
\frac{1}{2p-2q}\le \beta \le \frac{q}{p}
\end{align*}
induces $d$  in $\Delta_{2p,2q}$ of the form above which satisfies assumption (\ref{hypothesis}).

We now use Prop.~\ref{prop:first}. Assume that $2q\le p$. One needs to impose that $\alpha \le \frac{1}{2}$, which is equivalent to
$$\beta\ge \frac{q}{2p-2q}.$$
Thus $d\in \Delta_{2p,2q}$, satisfies assumption (\ref{hypothesis}), and is $2q$-admissible provided that
\begin{align*}
\frac{q}{2p-2q}\le \beta \le \frac{q}{p}
\end{align*}
If now $2q >p$, a sufficient condition for $d$ to be $2q$-admissible is
  \begin{align*}
\frac{1}{2}\le \beta \le \frac{q}{p}.
\end{align*} 

\subsection{Other examples: the case when $n$ is odd} \label{sub2}
Say that $n=2p+1$ and consider again $q$ such that $1\le q <p$. 

For the beginning, take $k=2q+1$ and assume that $k<p$. This time consider $d$ like in Prop.~\ref{prop:second}, such that
$d_1=\cdots = d_q=d_{2p+1}=\alpha$ and $d_{q+1}=\cdots=d_p=\beta$.  Concretely,
$$d=(\alpha, \ldots, \alpha, \beta, \ldots, \beta, \alpha, \ldots, \alpha, \beta, \ldots, \beta, \alpha),$$
where  both enumerations of $\alpha$ and $\beta$  contain $q$ and $p-q$ elements respectively.   
Again by Ex.~\ref{example2}, out of any $\beta$
such that 
\begin{align*}
\frac{1}{2p-2q}\le \beta \le \frac{2q+1}{2p+1}
\end{align*}
one obtains $d$  in $\Delta_{2p+1,2q+1}$ as above which satisfies assumption (\ref{hypothesis}).
We apply  Prop.~\ref{prop:second} (a) for $d'_1=\cdots=d'_p=\frac{k-1}{p}=\frac{2q}{p}.$
Observe that
$$\frac{1}{2p-2q}<\frac{q}{p},$$
and consequently if
$$\frac{q}{p}\le \beta \le \frac{2q+1}{2p+1}$$
then $d$ is also $2q+1$-admissible. 

Let us now assume that $k=2q<p$ and also   that $d$ is of the form
$d_1=\cdots =d_q=\alpha$ and $d_{q+1}=\cdots = d_p=d_{2p+1}=\beta$, which means that
$$d=(\alpha, \ldots, \alpha, \beta, \ldots, \beta, \alpha, \ldots, \alpha, \beta, \ldots, \beta, \beta).$$
The condition which insures that $d$ is in  $\Delta_{2p+1,2q}$ and satisfies  (\ref{hypothesis}) is
  $$\frac{1}{2p-2q+1}\le \beta \le \frac{2q}{2p+1}.$$
  Again we apply  Prop.~\ref{prop:second} (a), this time for $d'_1=\cdots=d'_p=\frac{k-1}{p}=\frac{2q-1}{p}.$
  One can easily see that
  $$\frac{1}{2p-2q+1}<\frac{2q-1}{2p}.$$
  Thus if
$$\frac{2q-1}{2p} \le \beta \le \frac{2q}{2p+1}$$
then $d$ is  $2q$-admissible. 

Note that in both situations we had $k<p$. Examples with $k\ge p$ can now be easily
produced by means of Lemma \ref{lem:DS}.

To make the examples of admissible vectors obtained above more easily accessible, we present them in the following table.
In each of the four situations listed here, $d$ belongs to a 1-parameter family labelled by $\beta$, which lies in an interval.    
\begin{table}[ht]
\centering 
\begin{tabularx}{\textwidth}{l|X}
\hline 
{\bf The vector $d$ in terms of $\alpha$ and $\beta$} & {\bf The interval for $\beta$} \\
\hline
\hline 
For $n=2p, k=2q$, take & If $k\le p$, then $ \frac{q}{2p-2q}\le \beta \le \frac{q}{p}$. \\ 
$d=(\alpha, \ldots, \alpha, \beta, \ldots, \beta, \alpha, \ldots, \alpha, \beta, \ldots, \beta),$ where & {}  \\ 
$\alpha$  and $\beta$ repeat themselves  $q$ and $p-q$ times resp., &  If $k> p$, then $\frac{1}{2}\le \beta \le \frac{q}{p}$.   \\
and $q\alpha+(p-q)\beta=q$. & {} \\
\hline 
For $n=2p+1, k=2q+1$, such that $k<p$, take & {} \\ 
$d=(\alpha, \ldots, \alpha, \beta, \ldots, \beta, \alpha, \ldots, \alpha, \beta, \ldots, \beta, \alpha),$ where & $\frac{q}{p}\le \beta \le \frac{2q+1}{2p+1}$  \\ 
 $\alpha$ and $\beta$ repeat themselves  $q$ and $p-q$ times resp., &  {}   \\
and $(2q+1)\alpha+(2p-2q)\beta=2q+1$. & {} \\
\hline 
For $n=2p+1, k=2q$, such that $k<p$, take & {} \\ 
$d=(\alpha, \ldots, \alpha, \beta, \ldots, \beta, \alpha, \ldots, \alpha, \beta, \ldots, \beta, \beta),$ where & $\frac{2q-1}{2p}\le \beta \le \frac{2q}{2p+1}$  \\ 
 $\alpha$ and $\beta$ repeat themselves  $q$ and $p-q$ times resp., &  {}   \\
and $2q\alpha+(2p-2q+1)\beta=2q$. & {} \\
\hline
\end{tabularx}
\end{table}

\subsection{Relations with  spaces of polygons in Euclidean plane.} \label{poly}

 Fix an integer number $n\ge 3$. An $n$-gon in $\RR^2$ is a sequence of $n$ vectors 
 $e_1, \ldots, e_n \in \RR^2$ such that $e_1+ \cdots + e_n=0$. The vectors are  actually the sides of the $n$-gon.
Fix non-negative real numbers $r_1, \ldots, r_n$   such that
$$r_1+\cdots+ r_n=1.$$ Let $\tilde{\mathcal P}(r_1, \ldots, r_n)$ be  
the set consisting  
of all $n$-gons with side-lengths
$$\|e_j\|=r_j, \ 1\le j \le n.$$
This  space, along with its quotient modulo the  group of orientation-preserving rotations ${\rm SO}(2)$, is by now fairly well understood. 
For example, as a subspace of $(\RR^2)^n$, 
 it becomes a topological space and as such it has been  
investigated in \cite{KM}, \cite{HK1}, and \cite{HK2}. 
Among others, it was shown in \cite{KM} that $\tilde{\mathcal P}(r_1, \ldots, r_n)$ is not connected if and only if 
there exist three pairwise distinct indices $i,j,$ and $k$ such that
$$r_i+r_j > \frac{1}{2},  \  r_j+r_k >\frac{1}{2},  {\rm and}  \ r_i+r_k >\frac{1}{2}.$$
 Important for us is the relationship between $\tilde{\mathcal P}(r_1, \ldots, r_n)$ and a certain space of normalized tight frames,
 which can be described as follows. First, identify $\RR^2$ with the field $\CC$ of complex numbers in the usual way. 
One can show that the map ${\mathcal F}_{n, 2}^{d_1,\ldots, d_n} \to \tilde{\mathcal P}(\frac{1}{2}d_1, \ldots, \frac{1}{2}d_n)$
given by 
$$[ \ z_1 \ | \ \ldots \ | \ z_n \ ] \mapsto \frac{1}{2}(z_1^2 , \ldots, z_n^2)$$
is well defined and surjective, see \cite[Prop.~5.1]{DS} and the references indicated in that paper.
 We conclude the discussion with the following criterion: if one can find three pairwise distinct indices $i,j$, and $k$ such that
 $$ d_i+d_j > 1,  \  d_j+d_k >1,  {\rm and}  \ d_i+d_k >1,$$
 then ${\mathcal F}_{n, 2}^{d_1,\ldots, d_n}$ is not connected.

On the other hand, by specializing the two criteria stated in Propositions \ref{prop:first} and \ref{prop:second} to the case $k=2$, the space
 ${\mathcal F}_{n, 2}^{d_1,\ldots, d_n}$ is connected under certain assumptions. One of them is the condition given by eq.~(\ref{hypothesis}).
 This condition immediately implies that for any two  distinct indices $i$ and $j$ one has
 $$d_i+d_j \le 1,$$
 which is in accordance with the observation made above.   

\begin{ex} The  necessary condition for ${\mathcal F}_{n, 2}^{d_1,\ldots, d_n}$  not to be connected mentioned above is not sufficient.
Just like in Example \ref{exa}, consider again the case when $n=4$, $k=2$, and $(d_1, d_2,d_3,d_4)=(1,1,0,0)$, for which that condition is
clearly not satisfied.  
On the other hand, the elements of ${\mathcal F}_{4, 2}^{1,1,0,0}$ are matrices of the type
$[ \ A \ | \ 0 \ ]$ where both $A$ and $0$ are $2\times 2$ matrices, the latter having all entries equal to 0 and the former being
orthogonal. Thus $ {\mathcal F}_{4, 2}^{1,1,0,0}$ is homeomorphic to the orthogonal group ${\rm O}(2)$, and consequently is not connected.
Observe however that both $\mu^{-1}(1,1,0,0)$ and $\tilde{\mathcal P}(\frac{1}{2},\frac{1}{2},0,0)$ are connected, see Example \ref{exa} and the theorem of Kapovich and Millson mentioned above.     
\end{ex}

\end{document}